\theoremstyle{plain}
\newtheorem{theorem}{Theorem}[section]
\newtheorem{lemma}[theorem]{Lemma}
\newtheorem{proposition}{Proposition}
\theoremstyle{definition}
\newtheorem{remark}{Remark}
\def \dv {\mathrm{div}}
\def \d {\mathrm{d}}
\title[Stable determination of coefficients in semilinear parabolic system] 
      {Stable determination of coefficients in semilinear parabolic system with dynamic boundary conditions}
\author{E. M. Ait Ben Hassi}
\author{S. E. Chorfi}
\author{L. Maniar}
\address{E. M. Ait Ben Hassi, S. E. Chorfi and L. Maniar, Cadi Ayyad University, Faculty of Sciences Semlalia, LMDP, UMMISCO (IRD-UPMC), B.P. 2390, Marrakesh, Morocco}
\email{m.benhassi@gmail.com, chorphi@gmail.com, maniar@uca.ma}
\subjclass[2020]{Primary: 35R30, 93B07; Secondary: 35K58, 93B05.}
 \keywords{semilinear parabolic systems, dynamic boundary conditions; inverse problem, Carleman estimate, observation, Lipschitz stability}
\begin{document}
\begin{abstract}
In this work, we study the stable determination of four space-dependent coefficients appearing in a coupled semilinear parabolic system with variable diffusion matrices subject to dynamic boundary conditions which couple intern-boundary phenomena. We prove a Lipschitz stability result for interior and boundary potentials by means of only one observation component, localized in any arbitrary open subset of the physical domain. The proof mainly relies on some new Carleman estimates for dynamic boundary conditions of surface diffusion type.
\end{abstract}

\maketitle

\section{Introduction and main results}

In the present paper, we deal with an inverse problem of determining interior and boundary coupling coefficients in a semilinear parabolic system with dynamic boundary conditions. We consider $\Omega \subset \mathbb{R}^N$, $N= 2, 3$, a nonempty bounded domain with boundary $\Gamma=\partial \Omega$ of class $C^2$. We denote by $\nu$ the normal vector field on $\Gamma$ pointing outwards the domain $\Omega$. Let $T>0$ and let us set
$$\Omega_T =(0,T)\times \Omega \qquad \text{ and } \qquad \Gamma_T=(0,T)\times \Gamma.$$
We consider the following semilinear system of coupled parabolic equations with dynamic boundary conditions
\begin{empheq}[left =\empheqlbrace]{alignat=2}
\begin{aligned}
&\partial_t y = \dv(A_1(x) \nabla y) + p_{11}(x)y + p_{12}(x)z + p_{13}(x)f(y,z), &\text{in } \Omega_T , \\
&\partial_t z = \dv(A_2(x) \nabla z) + p_{21}(x)y + p_{22}(x)z, &\text{in } \Omega_T , \\
&\partial_t y_{\Gamma}= \dv_{\Gamma} (D_1(x)\nabla_\Gamma y_{\Gamma}) -\partial_{\nu}^{A_1} y + q_{11}(x)y_{\Gamma} + q_{12}(x)z_{\Gamma}\\
& \hspace{1cm}+ q_{13}(x) g(y_{\Gamma},z_{\Gamma}), &\text{on } \Gamma_T, \\
&\partial_t z_{\Gamma}= \dv_{\Gamma} (D_2(x)\nabla_\Gamma z_{\Gamma}) -\partial_{\nu}^{A_2} z + q_{21}(x)y_{\Gamma} + q_{22}(x)z_{\Gamma}, &\text{on } \Gamma_T, \\
& y_{\Gamma}(t,x) = y_{|\Gamma}(t,x), \qquad  z_{\Gamma}(t,x) = z_{|\Gamma}(t,x),  &\text{on } \Gamma_T, \\
&(y,y_{\Gamma})\rvert_{t=0}=(y_0, y_{0,\Gamma}), \qquad  (z,z_{\Gamma})\rvert_{t=0}=(z_0, z_{0,\Gamma}),   &\Omega\times\Gamma,  \label{eq1to6}
\end{aligned}
\end{empheq}
where $(y_0, y_{0,\Gamma}), (z_0, z_{0,\Gamma})\in L^2(\Omega)\times L^2(\Gamma)$ are the initial states. We emphasize that $(y_{0,\Gamma}, z_{0,\Gamma})$ is not necessarily the trace on $\Gamma$ of $(y_0,z_0)$ (we do not assume that the latter has a trace), but if the trace of $(y_0,z_0)$ is well-defined, then the trace must coincide with $(y_{0,\Gamma}, z_{0,\Gamma})$. All potentials are assumed to be bounded, that is, $p_{ij} \in L^\infty(\Omega)$ and $q_{ij} \in L^\infty(\Gamma)$. The nonlinearities $f,g \colon \mathbb{R}^2 \rightarrow \mathbb{R}$ are assumed to be Lipschitz continuous with respect to the two variables. We assume that the diffusion matrices $A_k$ and $D_k$, $k=1,2,$ are symmetric and uniformly elliptic, i.e.,
\begin{align}
A_k=(a^k_{ij})_{i,j} \in C^1\left(\overline{\Omega}; \mathbb{R}^{N\times N}\right), \quad a^k_{ij}=a^k_{ji}, \quad 1\leq i,j\leq N, \label{symA}\\
D_k=(d^k_{ij})_{i,j} \in C^1\left(\Gamma; \mathbb{R}^{N\times N}\right), \quad d^k_{ij}=d^k_{ji}, \quad 1\leq i,j\leq N, \label{symD}
\end{align}
and there exist constants $\beta, \beta_\Gamma >0$ such that
\begin{align}
A_k(x)\zeta \cdot \zeta &\geq \beta |\zeta|^2, && x\in \overline{\Omega}, \;\zeta \in \mathbb{R}^N, \label{uellipA}\\
\langle D_k(x)\zeta, \zeta \rangle_{\Gamma} &\geq \beta_\Gamma |\zeta|_\Gamma^2, && x\in \Gamma, \;\zeta \in \mathbb{R}^N, \label{uellipD}
\end{align}
where $``\cdot"$ denotes the Euclidean inner product on $\mathbb{R}^N$ while $\langle \cdot, \cdot\rangle_\Gamma$ is the Riemannian inner product on $\Gamma$. By $y_{|\Gamma}$, one designates the trace of $y$, and by $\partial_{\nu}^{A_k} y :=(A_k\nabla y\cdot \nu)_{|\Gamma}$ the conormal derivative. The operator $\dv=\dv_x$ stands for the Euclidean divergence operator in $\Omega$. If $g$ denotes the natural Riemannian metric on $\Gamma$, $\dv_\Gamma$ stands for the divergence operator in $(\Gamma, g)$. The term $\dv_\Gamma\left(D_k(\cdot)\nabla y_\Gamma\right)$ represents the surface diffusion given by the vector field $D_k(\cdot)\nabla y_\Gamma: \Gamma \rightarrow T\Gamma,$ where $T\Gamma =\bigcup\limits_{x\in \Gamma} T_x\Gamma$ and $T_x\Gamma$ is the tangent space of $\Gamma$ at $x$. For any smooth function $y \colon \overline{\Omega} \rightarrow \mathbb{R}$, the tangential gradient $\nabla_\Gamma y$ is the part of the Euclidean gradient $\nabla y$ tangent to $\Gamma$, namely,
$$\nabla_\Gamma y = \nabla y -(\partial_\nu y) \nu.$$
Since $\Gamma$ is a compact Riemannian manifold without boundary, the following divergence formula holds
\begin{equation}\label{sdt}
\int_\Gamma (\dv_\Gamma X)z \,\d S =- \int_\Gamma \langle X, \nabla_\Gamma z \rangle_\Gamma \,\d S, \qquad z\in H^1(\Gamma),
\end{equation}
where $X$ is any $C^1$ vector field and $\d S$ is the surface measure on $\Gamma$.

\subsection{Physical motivation}
Dynamic boundary conditions (also known as generalized Wentzell boundary conditions) contain the time derivative on the boundary, which often simulates an imperfect contact with a given interface. In contrast to static boundary conditions (e.g., Dirichlet, Neumann or Robin conditions), dynamic boundary conditions arise naturally as part of the physical derivation when incorporating boundary conditions into the formulation of the problem. We refer to \cite{Go'06} for the derivation of this type of boundary conditions. A more recent and rather different approach which uses the Carslaw-Jaeger law can be found in \cite{Sa'20}. Parabolic systems with dynamic boundary conditions have received considerable attention in the last years \cite{BCMO'20, FGGR'02, KM'19, KMMR'19, KMO'22, MMS'17}. They appear in various mathematical models in population dynamics \cite{FH'11}, in heat transfer and diffusion phenomena, especially when the diffusion lies in the interface between a solid and a moving fluid, see \cite{La'32} and the references therein.

Semilinear coupled bulk-surface systems such as \eqref{eq1to6} arise in biological models of cells, chemistry, solid/fluid mechanics, and so on. For instance, the authors of \cite{EFPT'18} study a mathematical model for asymmetric stem cell division. In \cite{RR'12}, a relevant model is proposed for signaling molecules in a cell. Both papers present theoretical results on wellposedness and stability as well as numerical results on the discretization. We refer to \cite{Mie'13} and the references therein for some general models of bulk-interface interaction in thermomechanics. It is worthwhile to mention the recent paper \cite{MS'20} (see its references for more applications) which investigates the global wellposedness of semilinear systems like \eqref{eq1to6} with constant diffusion matrices. In particular, they provide some examples related to the Brusselator system as well as chemical reactions in cell division. In the previous applications, the coupling potentials $(p_{13}, q_{13}, p_{21}, q_{21})$ represent some unknown concentrations, reaction rates or radiative quantities either in the bulk $\Omega$ or on its surface $\Gamma$. These parameters play a crucial role in describing the underlying model and need to be determined from some experimental measurements.

\subsection{Literature on coefficient inverse problems}
Coefficient inverse problems for parabolic systems have been extensively studied in the literature and most results rely on suitable Carleman estimates. Yamamoto and Zou in 2001 have adapted the Bukhgeim-Klibanov method \cite{BK'81} to prove a Lipschitz stability result for the radiative potential in a single heat equation with Dirichlet boundary conditions \cite{YZ'01}. Later on, similar results were established by Choi \cite{Ch'03} for the same equation with space-periodic boundary conditions and coefficients. A similar problem for a nonlinear parabolic equation with periodic coefficients has been considered by Kaddouri and Teniou in \cite{KT'14}. In \cite{BP'02}, Baudouin and Puel have established a stability result for a spatial potential appearing in the Schrödinger equation. An inverse problem for the same equation with discontinuous coefficients has been studied by Baudouin and Mercado in \cite{BM'08}. Further stability results were obtained in \cite{MOR'08} for the potential from some less restrictive boundary and interior measurements. The authors developed some Carleman estimates under a relaxed pseudoconvexity condition allowing degenerate weights. Uniqueness and stability results were obtained by Benabdallah et al. in \cite{BGL'07} for a discontinuous diffusion coefficient and an initial datum in a heat equation by using a boundary measurement.

As for coupled systems, Cristofol et al. have proven in \cite{CGR'06} some stability results for a coefficient and for initial data by using some internal observed data. Their proof is based on a Carleman estimate with one observation component. An improvement of the previous work has been established for a nonlinear parabolic system in \cite{CGRY'12}. In \cite{BCGY'09}, some stability estimates were obtained by Benabdallah et al. for parabolic systems with first and zeroth order coupling coefficients. They proved Lipschitz stability estimates for some or all of the coefficients by using one observation component and without data of initial conditions.

In spite of the large literature for PDEs with static boundary conditions \cite{Is'17, KL'13}, inverse problems for parabolic systems with dynamic boundary conditions have not been much studied even for linear systems. Recently in \cite{ACMO'20, Ch'21}, the authors have considered an inverse source problem for a linear parabolic system with dynamic boundary conditions and established a Lipschitz stability estimate for the forcing terms from internal measurements. A numerical approach has been investigated in \cite{ACM'21'} for reconstructing the source term from the final overdetermination. In \cite{ACM'21}, we have studied an inverse problem of radiative potentials and initial temperatures in a linear parabolic system with dynamic boundary conditions. We have proven a Lipschitz stability estimate for internal and boundary potentials. Regarding the semilinear case of coupled systems, Sakthivel et al. \cite{SAB'20} have recently proven a Lipschitz stability result for a coefficient in a semilinear tumor growth model from boundary measurements. We refer to the paper \cite{CGRY'12} for Dirichlet conditions, which develops a strategy based on a Carleman estimate obtained in \cite{Fu'00} along with a positivity result from \cite{Sm'83} given by the method of invariant regions. In our case, the situation is quite complicated due to the coupling of intern-boundary phenomena in addition to the coupling by the coefficients. Consequently, our results draw on new developed Carleman estimates in which we had to absorb several boundary terms. Moreover, we prove the positivity result in a different and rather simple way.

\subsection{Statement of the main result}
We are in position to formulate the problem and state the main result. We denote by $\mathbb{L}^\infty=L^\infty(\Omega)\times L^\infty(\Gamma)$, and for a fixed constant $R>0$, we consider the set of admissible potentials
\begin{align}
\mathcal{P}:=\{(p,q) \in \mathbb{L}^\infty \colon \|p\|_\infty ,\|q\|_\infty \leq R\}.
\end{align}
We are interested in the simultaneous determination of the coupling coefficients using only one observation component, namely, the identification of the potentials
$$(p_{13}, q_{13}) \qquad \text{ and } \qquad (p_{21}, q_{21})$$
in system \eqref{eq1to6} belonging to $\mathcal{P}$, from the measurement
$$z\rvert_{(t_0,t_1) \times \omega},$$
where $(t_0,t_1) \subset (0,T), \; t_0<t_1,$ and $\omega \Subset \Omega$ is a nonempty open set.

We denote the Lebesgue measure on $\Omega$ by $\d x$ and the surface measure on $\Gamma$ by $\d S$. Let us introduce the real Hilbert space
$$\mathbb{L}^2 :=L^2(\Omega, \d x)\times L^2(\Gamma, \d S)$$
equipped with the norm
$$
\left\|\left(y, y_{\Gamma}\right)\right\|_{\mathbb{L}^2}=\left\langle\left(y, y_{\Gamma}\right),\left(y, y_{\Gamma}\right)\right\rangle_{\mathbb{L}^2}^{1 / 2},$$
with the inner product
$$\langle (y,y_\Gamma),(z,z_\Gamma)\rangle_{\mathbb{L}^2} =\langle y,z\rangle_{L^2(\Omega)} +\langle y_\Gamma,z_\Gamma\rangle_{L^2(\Gamma)}.$$
Also, we consider the Hilbert space
$$\mathbb{H}^2 := \{(u,u_{\Gamma}) \in H^2(\Omega) \times H^2(\Gamma): u_{|\Gamma}=u_{\Gamma} \}$$
equipped with the norm
$$
\left\|\left(y, y_{\Gamma}\right)\right\|_{\mathbb{H}^{2}}=\left\langle\left(y, y_{\Gamma}\right),\left(y, y_{\Gamma}\right)\right\rangle_{\mathbb{H}^{2}}^{1 / 2},
$$
with the inner product
$$\left\langle\left(y, y_{\Gamma}\right),\left(z, z_{\Gamma}\right)\right\rangle_{\mathbb{H}^{2}}=\langle \Delta y, \Delta z\rangle_{L^2(\Omega)} + \langle\Delta_{\Gamma} y_{\Gamma}, \Delta_{\Gamma} z_{\Gamma}\rangle_{L^2(\Gamma)} +\langle y_\Gamma,z_\Gamma\rangle_{L^2(\Gamma)}.$$

We denote by $(y,z,y_\Gamma, z_\Gamma)$ (resp. $(\widetilde{y}, \widetilde{z},\widetilde{y}_\Gamma, \widetilde{z}_\Gamma)$) the solution of \eqref{eq1to6} corresponding to $(p_{13}, q_{13}, p_{21}, q_{21}, y_0,z_0,y_{0,\Gamma}, z_{0,\Gamma})$ (resp. $(\widetilde{p}_{13}, \widetilde{q}_{13}, \widetilde{p}_{21}, \widetilde{q}_{21}, \widetilde{y}_0,\widetilde{z}_0,\widetilde{y}_{0,\Gamma}, \widetilde{z}_{0,\Gamma})$). We shall make the following assumptions:\\
\textbf{Assumption I.}
\begin{itemize}
\item[(i)] $(p_{ij},q_{ij}), (\widetilde{p}_{13},\widetilde{q}_{13}), (\widetilde{p}_{21},\widetilde{q}_{21}) \in \mathcal{P}$, for $i=1,2$ and $j=1,2,3$.
\item[(ii)] There exist constants $r>0$ and $p_0>0$ such that
\begin{align*}
&\widetilde{y}_0, \widetilde{y}_{0,\Gamma} \ge r\quad \text{ and } \quad \widetilde{z}_0, \widetilde{z}_{0,\Gamma} \ge 0,\\
&p_{11}r + p_{12} \widetilde{z}_0 + \widetilde{p}_{13}f(r,\widetilde{z}_0) \ge 0,\\
&q_{11}r + q_{12} \widetilde{z}_{0,\Gamma} + \widetilde{q}_{13}g(r,\widetilde{z}_{0,\Gamma}) \ge 0,\\
&p_{21}, q_{21}\ge p_0 \quad \text{and} \quad\widetilde{p}_{21}, \widetilde{q}_{21}\ge p_0.
\end{align*}
\end{itemize}
Let us set $\theta=\frac{t_0+t_1}{2}$. We emphasize that Assumption I-(ii) is made to ensure that
\begin{equation}\label{pineq}
\widetilde{y}(\theta,\cdot) \ge r \text{ in } \Omega \qquad \text{and}\qquad \widetilde{y}_\Gamma(\theta,\cdot) \ge r \text{ on } \Gamma.
\end{equation}
\textbf{Assumption II.}
\begin{itemize}
\item[(i)] $f,g\in W^{1,\infty}\left(\mathbb{R}^2\right)$.
\item[(ii)] $\exists r_1>0 \colon \; |f(\widetilde{y},\widetilde{z})(\theta, \cdot)| \ge r_1\;$ and $\; |g(\widetilde{y}_\Gamma, \widetilde{z}_\Gamma)(\theta, \cdot)| \ge r_1 $.
\item[(iii)] $\partial_t f(\widetilde{y}, \widetilde{z}) \in L^2\left(t_0, t_1; H^2(\Omega)\right)\;$ and $\; \partial_t g(\widetilde{y}_\Gamma, \widetilde{z}_\Gamma) \in L^2\left(t_0, t_1; H^2(\Gamma)\right)$.
\end{itemize}
Assumption II will allow us to absorb some terms in the proof of the stability estimate. An example of semilinearities satisfying the above assumptions is given by $(y,z) \mapsto y^d z^\delta$, where $d$ and $\delta$ are nonnegative constants.

We mainly aim to establish the following Lipschitz stability estimate.
\begin{theorem}\label{thm1}
Let assumptions Assumption I and Assumption II be satisfied. Denote by $\widetilde{Y}_0:=(\widetilde{y}_0, \widetilde{y}_{0,\Gamma})$ and $\widetilde{Z}_0:=(\widetilde{z}_0, \widetilde{z}_{0,\Gamma})$. We further assume that $\widetilde{Y}_0, \widetilde{Z}_0 \in \mathbb{H}^2$ and $(y,z)(\theta, \cdot)=(\widetilde{y},\widetilde{z})(\theta, \cdot)$ in $\Omega$. Then there exists a positive constant $C=C(\Omega, \omega, p_0, \theta, t_0, t_1, r, r_1, R)$ such that
\begin{equation}\label{nstab}
\|(p_{21}-\widetilde{p}_{21},q_{21}-\widetilde{q}_{21})\|_{\mathbb{L}^2} + \|(p_{13}-\widetilde{p}_{13},q_{13}-\widetilde{q}_{13})\|_{\mathbb{L}^2}\le C \|\partial_t z - \partial_t \widetilde{z}\|_{L^2(\omega_{t_0, t_1})},
\end{equation}
where $$\omega_{t_0, t_1} :=(t_0,t_1) \times \omega.$$
\end{theorem}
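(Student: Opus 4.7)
The plan is to start by subtracting the two copies of \eqref{eq1to6}, setting $u=y-\widetilde y$, $v=z-\widetilde z$, $u_\Gamma=y_\Gamma-\widetilde y_\Gamma$, $v_\Gamma=z_\Gamma-\widetilde z_\Gamma$, and $P=p_{13}-\widetilde p_{13}$, $Q=q_{13}-\widetilde q_{13}$, $R=p_{21}-\widetilde p_{21}$, $S=q_{21}-\widetilde q_{21}$. The tuple $(u,v,u_\Gamma,v_\Gamma)$ solves a linear coupled parabolic system with dynamic boundary conditions whose source terms are precisely $P f(y,z)$, $Q g(y_\Gamma,z_\Gamma)$, $R\,\widetilde y$, $S\,\widetilde y_\Gamma$, plus Lipschitz remainders $\widetilde p_{13}(f(y,z)-f(\widetilde y,\widetilde z))$ (and its boundary analogue) that are of order $|u|+|v|$ and will be absorbed as zeroth-order coefficients. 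Differentiating the linearised system in $t$ and writing $U=\partial_t u$, $V=\partial_t v$, $U_\Gamma=\partial_t u_\Gamma$, $V_\Gamma=\partial_t v_\Gamma$ (the needed regularity coming from $\widetilde Y_0,\widetilde Z_0\in\mathbb H^2$ together with Assumption II-(iii)) yields another linear coupled system for $(U,V,U_\Gamma,V_\Gamma)$. The overdetermination $(y,z)(\theta,\cdot)=(\widetilde y,\widetilde z)(\theta,\cdot)$ forces $u(\theta)=v(\theta)=u_\Gamma(\theta)=v_\Gamma(\theta)=0$, so evaluating the linearised equations at $t=\theta$ kills every term carrying $u,v,u_\Gamma,v_\Gamma$ and leaves the clean identities
\[
U(\theta,\cdot)=P\,f(\widetilde y,\widetilde z)(\theta,\cdot),\qquad V(\theta,\cdot)=R\,\widetilde y(\theta,\cdot),
\]
with the analogous identities on $\Gamma$ involving $Q,S$. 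Combining with the nondegeneracy bounds $|f(\widetilde y,\widetilde z)(\theta,\cdot)|\ge r_1$, $\widetilde y(\theta,\cdot)\ge r$, and their boundary counterparts, the left-hand side of \eqref{nstab} is controlled by $\|U(\theta)\|_{L^2(\Omega)}^2+\|V(\theta)\|_{L^2(\Omega)}^2+\|U_\Gamma(\theta)\|_{L^2(\Gamma)}^2+\|V_\Gamma(\theta)\|_{L^2(\Gamma)}^2$.

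\textbf{Carleman with a single observation, then passage to $t=\theta$.} The core task is then to dominate this quantity by $\|\partial_t v\|_{L^2(\omega_{t_0,t_1})}^2$. I would apply the bulk--surface Carleman estimate announced in the introduction to the $(U,V,U_\Gamma,V_\Gamma)$ system, using a weight $\varphi$ that is of standard Carleman type in $x$, attains its maximum at $t=\theta$, and tends to $-\infty$ at $t=t_0,t_1$ so that $e^{2s\varphi}$ vanishes at these endpoints. Because only $V$ on $\omega_{t_0,t_1}$ is observed, the positivity assumption $\widetilde p_{21},\widetilde q_{21}\ge p_0>0$ is exploited to solve the $V$-equation (and its boundary analogue) for $U$ (resp. $U_\Gamma$) in the form
\[
U=\tfrac{1}{\widetilde p_{21}}\bigl(\partial_tV-\dv(A_2\nabla V)-p_{22}V-R\,\partial_t\widetilde y\bigr),
\]
and to substitute this expression in the weighted $L^2$-norm of $U$ appearing on the right of the Carleman inequality for $U$. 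Choosing $s$ large enough to absorb the Lipschitz remainders from $f,g$ (this is where Assumption II-(i), (iii) is crucial) together with the $R,S$ contributions produced by this elimination, one obtains a global weighted bound on $(U,V,U_\Gamma,V_\Gamma)$ controlled by the single observation. The passage to the value at $\theta$ is then the classical energy identity
\[
\int_\Omega e^{2s\varphi(\theta,\cdot)}|U(\theta)|^2\,\d x=\int_{t_0}^{\theta}\!\int_\Omega \partial_t\bigl(e^{2s\varphi}|U|^2\bigr)\,\d x\,\d t,
\]
(and the analogue for $V, U_\Gamma, V_\Gamma$), in which $\partial_t U$ is replaced by the PDE and the resulting space--time integral is estimated by the Carleman right-hand side. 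Since $e^{-2s\varphi(\theta,\cdot)}$ is bounded, one recovers $\|U(\theta)\|^2+\|V(\theta)\|^2+\|U_\Gamma(\theta)\|^2+\|V_\Gamma(\theta)\|^2\le C\|\partial_tz-\partial_t\widetilde z\|_{L^2(\omega_{t_0,t_1})}^2$, which together with Step 1 gives \eqref{nstab}.

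\textbf{Main obstacle.} The genuinely delicate ingredient is the Carleman inequality itself. Dynamic boundary conditions of surface-diffusion type generate nontrivial $\Gamma_T$-integrals under integration by parts, coupled to the $\Omega_T$-Carleman computation through the conormal derivatives $\partial_\nu^{A_k}$; absorbing these integrals demands a weight compatible with both the Euclidean structure of $\Omega$ and the Riemannian structure of $\Gamma$, and moreover one that peaks at an interior time $\theta\in(t_0,t_1)$ rather than decaying at the endpoints of $(0,T)$ as in standard controllability arguments. Superimposed on this, the single-observation constraint forces the cascade $V\rightsquigarrow U$ described above, so the Carleman inequality for $U$ must be absorbed into that for $V$ at the level of the highest-order derivatives of $V$, which is possible precisely because $\widetilde p_{21},\widetilde q_{21}\ge p_0$ is a \emph{strict} positivity. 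Once this combined Carleman estimate is available, everything else in the proof is careful bookkeeping on the Lipschitz remainders and the boundary energy identity.
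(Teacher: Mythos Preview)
Your proposal is correct and follows essentially the same strategy as the paper: linearise, differentiate in time, apply a one-observation Carleman estimate to the resulting system, then use the Bukhgeim--Klibanov energy identity on $(t_0,\theta)$ together with the pointwise identities at $t=\theta$ and the nondegeneracy bounds to recover the coefficients. Two minor remarks: in the cascade formula the coupling coefficient that actually appears in the $V$-equation is $p_{21}$ (not $\widetilde p_{21}$), since the subtraction gives $p_{21}y-\widetilde p_{21}\widetilde y=p_{21}u+(p_{21}-\widetilde p_{21})\widetilde y$; and the paper implements the elimination of the $U$-observation not by a global substitution but by multiplying the $V$-equation against $\chi p_{21}y$ on $\omega$ with a cutoff $\chi$ and integrating by parts, which is how the ``highest-order derivatives of $V$'' you mention are handled in practice (this also explains the two different weight exponents $\tau=-3$ and $\tau=0$ for $U$ and $V$ in the paper's Carleman lemma).
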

Consequently, we infer the following uniqueness result.
\begin{proposition}
Under the same assumptions of Theorem \ref{thm1}, if
$$\partial_t z = \partial_t \widetilde{z} \qquad \text{ a.e in } \omega_{t_0, t_1},$$
then
\begin{align*}
p_{21} &=\widetilde{p}_{21} \qquad \text{and} \qquad p_{13} =\widetilde{p}_{13} \quad \text{a.e in} \quad  \Omega, \\
q_{21} &=\widetilde{q}_{21} \qquad \text{and} \qquad q_{13} =\widetilde{q}_{13} \quad \text{a.e on}\quad \Gamma.
\end{align*}
\end{proposition}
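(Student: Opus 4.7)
The proposition is an immediate corollary of Theorem \ref{thm1}, so the plan is simply to invoke the Lipschitz stability estimate \eqref{nstab} and observe that its right-hand side vanishes under the hypothesis. My proof would proceed in one short step.

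First I would note that the standing hypotheses of Theorem \ref{thm1} are in force, so the estimate \eqref{nstab} applies to the pair of solutions $(y,z,y_\Gamma,z_\Gamma)$ and $(\widetilde{y},\widetilde{z},\widetilde{y}_\Gamma,\widetilde{z}_\Gamma)$. Next, assuming $\partial_t z = \partial_t \widetilde{z}$ a.e.\ in $\omega_{t_0,t_1}$, the $L^2(\omega_{t_0,t_1})$-norm on the right-hand side of \eqref{nstab} equals zero. Therefore
\[
\|(p_{21}-\widetilde{p}_{21},q_{21}-\widetilde{q}_{21})\|_{\mathbb{L}^2} + \|(p_{13}-\widetilde{p}_{13},q_{13}-\widetilde{q}_{13})\|_{\mathbb{L}^2}=0.
\]
Since both summands are nonnegative, each vanishes individually, and unpacking the definition of the $\mathbb{L}^2$-norm as $L^2(\Omega,\mathrm{d}x)\times L^2(\Gamma,\mathrm{d}S)$ yields $p_{13}=\widetilde{p}_{13}$ and $p_{21}=\widetilde{p}_{21}$ a.e.\ in $\Omega$, and $q_{13}=\widetilde{q}_{13}$ and $q_{21}=\widetilde{q}_{21}$ a.e.\ on $\Gamma$, as claimed.

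There is no real obstacle here: once Theorem \ref{thm1} has been proved, the uniqueness statement follows without any additional analytic input. The only thing worth double-checking is that the hypotheses of the theorem (Assumption I, Assumption II, regularity of $\widetilde{Y}_0,\widetilde{Z}_0$ in $\mathbb{H}^2$, and the equality $(y,z)(\theta,\cdot)=(\widetilde{y},\widetilde{z})(\theta,\cdot)$ in $\Omega$) are the same as those imposed in the proposition, which is indeed the case by the phrase ``under the same assumptions of Theorem \ref{thm1}''.
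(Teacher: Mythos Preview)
Your proposal is correct and matches the paper's own treatment: the proposition is stated immediately after Theorem \ref{thm1} with the phrase ``Consequently, we infer the following uniqueness result'' and no separate proof is given, precisely because it follows by plugging the hypothesis into \eqref{nstab} as you describe.
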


The paper is organized as follows. In Section \ref{sec2}, we prove a positivity result that is needed in the sequel. In Section \ref{sec3}, we prove a new Carleman estimate for the coupled parabolic system with data of only one component of the solution. In Section \ref{sec4}, we apply the obtained Carleman estimate to prove the Lipchitz stability result for the coupling coefficients using only one observation component. Finally, in Appendix \ref{app}, we sketch the proof of a Carleman estimate for a single parabolic equation with dynamic boundary conditions and a general parameter.

\section{Positivity of the solution}\label{sec2}
In this section, we prove a positivity result of the solution to a general semilinear coupled parabolic system with dynamic boundary conditions. We consider the solution of the following system.
\begin{empheq}[left =\empheqlbrace]{alignat=2}
\begin{aligned}
&\partial_t y = \dv(A_1(x) \nabla y) + f_1(y,z), &\text{in } \Omega_T , \\
&\partial_t z = \dv(A_2(x) \nabla z)  + f_2(y,z), &\text{in } \Omega_T , \\
&\partial_t y_{\Gamma}= \dv_{\Gamma} (D_1(x)\nabla_\Gamma y_{\Gamma}) -\partial_{\nu}^{A_1} y + g_1(y_\Gamma, z_\Gamma), &\text{on } \Gamma_T, \\
&\partial_t z_{\Gamma}= \dv_{\Gamma} (D_2(x)\nabla_\Gamma z_{\Gamma}) -\partial_{\nu}^{A_2} z + g_2(y_\Gamma, z_\Gamma), &\text{on } \Gamma_T, \\
& y_{\Gamma}(t,x) = y_{|\Gamma}(t,x), \qquad  z_{\Gamma}(t,x) = z_{|\Gamma}(t,x),  &\text{on } \Gamma_T, \\
&(y,y_{\Gamma})\rvert_{t=0}=(y_0, y_{0,\Gamma}), \qquad  (z,z_{\Gamma})\rvert_{t=0}=(z_0, z_{0,\Gamma}),   &\Omega\times\Gamma,  \label{peq1to6}
\end{aligned}
\end{empheq}
where $f_k, g_k \colon \mathbb{R}^2 \rightarrow \mathbb{R}, k=1,2,$ are Lipschitz continuous functions with respect to the two variables.

Recall that $\mathbb{L}^2$ is a Hilbert lattice, and its positive cone is the product of the positive cones of $L^2(\Omega)$ and $L^2(\Gamma)$. For $\mathfrak{u}:=(u,u_\Gamma)\in \mathbb{L}^2 $, we denotes by $\mathfrak{u}^+:=(u^+, u_\Gamma^+)$ and $\mathfrak{u}^- :=(u^-, u_\Gamma^-)$, where $v^+ :=\max\{v,0\}$ and $v^- :=-\min\{v,0\}$ either in $\Omega$ or on $\Gamma$. For the regularity of the solution, we introduce the following spaces
\begin{align*}
\mathbb{E}_1(t_0,t_1) &:=H^1\left(t_0,t_1 ;\mathbb{L}^2\right) \cap L^2\left(t_0,t_1 ;\mathbb{H}^2\right),\\
\mathbb{E}_1 &:= \mathbb{E}_1(0,T).
\end{align*}

System \eqref{peq1to6} is wellposed since the operator $\mathcal{A} \colon D(\mathcal{A}) \subset \mathbb{L}^2 \times \mathbb{L}^2 \rightarrow \mathbb{L}^2 \times \mathbb{L}^2$ given by $\mathcal{A}=\mathrm{diag}\left(\mathcal{A}_1, \mathcal{A}_2\right)$ and $D(\mathcal{A})=D(\mathcal{A}_1)\times D(\mathcal{A}_2)$, where
\begin{equation*}
\mathcal{A}_k=\begin{pmatrix} \dv\left(A_k\nabla\right) & 0\\ -\partial^{A_k}_\nu & \dv_\Gamma\left(D_k\nabla_\Gamma\right) \end{pmatrix}, \qquad \qquad D(\mathcal{A}_k)=\mathbb{H}^2, \quad k=1,2. \label{E21}
\end{equation*}
generates an analytic $C_0$-semigroup on $\mathbb{L}^2 \times \mathbb{L}^2$. We refer to \cite{ACMO'20} for more details. 

We will use the following assumption to prove that \eqref{peq1to6} has nonnegative solution for nonnegative initial data:

(\textbf{QP}) The functions $f_1, f_2, g_1$ and $g_2$ are quasi-positive. That is,
\begin{align*}
f_1(0,v) \ge 0 \quad \text{ and }\quad g_1(0,v) \ge 0 \quad \forall v\ge 0,\\
f_2(u,0) \ge 0 \quad \text{ and }\quad g_2(u,0) \ge 0 \quad \forall u\ge 0.
\end{align*}
Following \cite{MS'16}, we prove the following lemma.
\begin{lemma}
Let $(y_0, y_{0,\Gamma})$ and $(z_0, z_{0,\Gamma})$ be componentwise nonnegative initial data. Suppose that (\textbf{QP}) holds true. Then the solution $(y,y_\Gamma, z, z_\Gamma)$ of \eqref{peq1to6} is componentwise nonnegative.
\end{lemma}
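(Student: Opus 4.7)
The plan is an energy estimate for the negative parts $y^{-}, y_\Gamma^{-}, z^{-}, z_\Gamma^{-}$ of the four unknowns. The structural feature that makes the bulk--surface system amenable to this strategy is that, after integration by parts in the bulk, the boundary integral $\int_{\Gamma}(\partial_{\nu}^{A_{1}} y)\, y_{\Gamma}^{-}\,\d S$ produced by the $y$-equation cancels exactly the conormal term appearing in the surface $y_\Gamma$-equation, once both are tested against the same $y_\Gamma^{-}$ and the trace identity $y^{-}|_{\Gamma} = y_{\Gamma}^{-}$ (which follows from the compatibility $y_{|\Gamma}=y_\Gamma$) is invoked. An analogous cancellation handles the $(z, z_\Gamma)$ pair.

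Concretely, I would use the Sobolev chain rule $\nabla y^{-} = -\chi_{\{y<0\}}\nabla y$ (with the obvious analogue on $\Gamma$), together with the resulting identities $A_{1}\nabla y \cdot \nabla y^{-} = -A_{1}\nabla y^{-} \cdot \nabla y^{-}$ and $\langle D_{1}\nabla_\Gamma y_\Gamma, \nabla_\Gamma y_\Gamma^{-}\rangle_\Gamma = -\langle D_{1}\nabla_\Gamma y_\Gamma^{-}, \nabla_\Gamma y_\Gamma^{-}\rangle_\Gamma$. Multiplying the first equation of \eqref{peq1to6} by $y^{-}$ and integrating over $\Omega$, the third equation by $y_\Gamma^{-}$ and integrating over $\Gamma$ by means of \eqref{sdt}, and summing, the boundary cancellation together with the uniform ellipticity \eqref{uellipA}--\eqref{uellipD} gives
\[
\frac{1}{2}\frac{d}{dt}\bigl(\|y^{-}\|_{L^{2}(\Omega)}^{2} + \|y_\Gamma^{-}\|_{L^{2}(\Gamma)}^{2}\bigr) + \beta\|\nabla y^{-}\|_{L^{2}(\Omega)}^{2} + \beta_\Gamma\|\nabla_\Gamma y_\Gamma^{-}\|_{L^{2}(\Gamma)}^{2} \le -\int_{\Omega} f_{1}(y,z)\, y^{-}\,\d x - \int_{\Gamma} g_{1}(y_\Gamma,z_\Gamma)\, y_\Gamma^{-}\,\d S,
\]
and the same argument, applied to the $(z, z_\Gamma)$ pair, yields the analogous inequality with $f_{2}, g_{2}$.

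To close the estimate I combine the quasi-positivity hypothesis (\textbf{QP}) with the Lipschitz continuity of the nonlinearities. On the set $\{y<0\}$, one has $y = -y^{-}$, so the decomposition $f_{1}(y,z) = [f_{1}(-y^{-}, z) - f_{1}(0, z^{+})] + f_{1}(0, z^{+})$ shows that the first bracket is bounded in absolute value by $L(y^{-} + z^{-})$ while the second is nonnegative; thus $-f_{1}(y,z)\, y^{-} \le C\bigl((y^{-})^{2} + (z^{-})^{2}\bigr)$ pointwise. Handling the three remaining reaction terms in the same fashion, summing all four energy identities, and discarding the nonnegative gradient contributions, I obtain
\[
\frac{d}{dt}\Phi(t) \le C\,\Phi(t), \qquad \Phi(t) := \|y^{-}\|_{L^{2}(\Omega)}^{2} + \|z^{-}\|_{L^{2}(\Omega)}^{2} + \|y_\Gamma^{-}\|_{L^{2}(\Gamma)}^{2} + \|z_\Gamma^{-}\|_{L^{2}(\Gamma)}^{2}.
\]
Since the initial data are componentwise nonnegative, $\Phi(0)=0$, and Gronwall's lemma forces $\Phi \equiv 0$ on $[0,T]$, proving the claim. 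The only delicate point I anticipate is justifying the chain rule $\frac{d}{dt}\|y^{-}(t)\|^{2} = -2\langle \partial_{t} y(t),\, y^{-}(t)\rangle$, together with its surface counterpart, on the space $\mathbb{E}_{1}$; this is standard for $y \in H^{1}(0,T; L^{2}) \cap L^{2}(0,T; H^{1})$, and the compatibility built into $\mathbb{H}^{2}$ is precisely what aligns the bulk and surface $H^{1}$-traces of $y^{-}$ so that the cancellation of the conormal terms is legitimate.
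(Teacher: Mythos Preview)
Your argument is correct, but it follows a different route from the paper's. The paper does not work with the original system \eqref{peq1to6} directly; instead it introduces an auxiliary system in which the nonlinearities $f_k(y,z)$, $g_k(y_\Gamma,z_\Gamma)$ are replaced by $f_k(y^{+},z^{+})$, $g_k(y_\Gamma^{+},z_\Gamma^{+})$. For that system, testing the $y$-equation against $y^{-}$ and the $y_\Gamma$-equation against $y_\Gamma^{-}$ yields (after the same bulk--surface cancellation you describe) a right-hand side $-\int_\Omega y^{-}f_1(y^{+},z^{+})\,\d x-\int_\Gamma y_\Gamma^{-}g_1(y_\Gamma^{+},z_\Gamma^{+})\,\d S$ that is \emph{nonpositive outright}, because on $\{y<0\}$ one has $y^{+}=0$ and hence $f_1(y^{+},z^{+})=f_1(0,z^{+})\ge 0$ by (\textbf{QP}). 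This gives the pure monotonicity inequality $\frac{d}{dt}\bigl(\|y^{-}\|_{L^2(\Omega)}^2+\|y_\Gamma^{-}\|_{L^2(\Gamma)}^2\bigr)\le 0$ with no Gronwall step; nonnegativity of the auxiliary solution then makes it a solution of the original system, and uniqueness closes the argument. Your approach trades the auxiliary-system/uniqueness device for a Lipschitz estimate on the reaction terms plus Gronwall: more self-contained and avoiding the implicit appeal to uniqueness, at the cost of a slightly less sharp differential inequality. Both proofs rest on the same structural observation---the exact cancellation of the conormal boundary term between the bulk and surface energy identities---which you have identified correctly.
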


\begin{proof}
Consider the solution of the system
\begin{empheq}[left =\empheqlbrace]{alignat=2}
\begin{aligned}
&\partial_t y = \dv(A_1(x) \nabla y) + f_1(y^+,z^+), &\text{in } \Omega_T , \\
&\partial_t z = \dv(A_2(x) \nabla z)  + f_2(y^+,z^+), &\text{in } \Omega_T , \\
&\partial_t y_{\Gamma}= \dv_{\Gamma} (D_1(x)\nabla_\Gamma y_{\Gamma}) -\partial_{\nu}^{A_1} y + g_1(y_\Gamma^+, z_\Gamma^+), &\text{on } \Gamma_T, \\
&\partial_t z_{\Gamma}= \dv_{\Gamma} (D_2(x)\nabla_\Gamma z_{\Gamma}) -\partial_{\nu}^{A_2} z + g_2(y_\Gamma^+, z_\Gamma^+), &\text{on } \Gamma_T, \\
& y_{\Gamma}(t,x) = y_{|\Gamma}(t,x), \qquad  z_{\Gamma}(t,x) = z_{|\Gamma}(t,x),  &\text{on } \Gamma_T, \\
&(y,y_{\Gamma})\rvert_{t=0}=(y_0, y_{0,\Gamma}), \qquad  (z,z_{\Gamma})\rvert_{t=0}=(z_0, z_{0,\Gamma}),   &\Omega\times\Gamma.  \label{p1eq1to6}
\end{aligned}
\end{empheq}
First, note that the functions $(y,z) \mapsto f_1(y^+,z^+)$ and $(y,z) \mapsto f_2(y^+,z^+)$ (resp. $(y_\Gamma, z_\Gamma) \mapsto g_1(y_\Gamma^+, z_\Gamma^+)$ and $(y_\Gamma, z_\Gamma) \mapsto g_2(y_\Gamma^+, z_\Gamma^+)$) are Lipschitzian. Multiplying the first equation  by $y^-$ and the third equation by $y_\Gamma^-$, we obtain
\begin{align}
y^- \partial_t y - y^-\dv(A_1(x) \nabla y) &= y^- f_1(y^+,z^+) \label{pp1}\\
y_\Gamma^- \partial_t y_{\Gamma} - y_\Gamma^- \dv_{\Gamma} (D_1(x)\nabla_\Gamma y_{\Gamma}) + y_\Gamma^- \partial_{\nu}^{A_1} y &=y_\Gamma^- g_1(y_\Gamma^+, z_\Gamma^+) \label{pp2}.
\end{align}
Since $u^- \partial_t u =-\frac{1}{2} \partial_t (u^-)^2$, integrating \eqref{pp1} and \eqref{pp2} by parts over $\Omega$ and $\Gamma$ respectively, and adding up the resulting identities, we derive
\begin{align*}
& \frac{1}{2} \partial_t \|y^-(t,\cdot)\|^2_{L^2(\Omega)} + \frac{1}{2} \partial_t \|y_\Gamma^-(t,\cdot)\|^2_{L^2(\Gamma)} + \int_\Omega A_1(x) \nabla y^- \cdot \nabla y^- \,\d x\\
& \qquad\qquad  + \int_\Gamma \langle D_1(x)\nabla_\Gamma y_\Gamma^-, \nabla_\Gamma y_\Gamma^- \rangle_\Gamma \, \d S\\
& \quad = -\int_\Omega y^- f_1(y^+,z^+)\, \d x - \int_\Gamma y_\Gamma^- g_1(y_\Gamma^+, z_\Gamma^+) \, \d S.
\end{align*}
Using the ellipticity of $A_1$ and $D_1$ with the fact that $f_1$ and $g_1$ are quasi-positive, we deduce that
\begin{equation}
\frac{1}{2} \partial_t \|y^-(t,\cdot)\|^2_{L^2(\Omega)} + \frac{1}{2} \partial_t \|y_\Gamma^-(t,\cdot)\|^2_{L^2(\Gamma)} \le 0. \label{pp3}
\end{equation}
Since $(y_0, y_{0,\Gamma})$ is componentwise nonnegative, the monotonicity inequality \eqref{pp3} implies that $(y,y_\Gamma)$ is componentwise nonnegative. Similarly, using the second and fourth equations, we obtain that $(z,z_\Gamma)$ is componentwise nonnegative.
\end{proof}

\section{Carleman estimates}\label{sec3}
Let $\eta^0 \in C^2(\overline{\Omega})$ be a function such that (see \cite{FI'96}):
\begin{empheq}[left = \empheqlbrace]{alignat=2}
\begin{aligned}
&\eta^0 > 0 \quad\text{ in } \Omega, \quad |\nabla\eta^0| \geq C_0 > 0 \quad\text{ in } \overline{\Omega\backslash\omega'},\\
&\eta^0=0 \quad\text{ and }\quad \;\partial_\nu \eta^0 \leq -c <0 \;\quad\text{ on } \Gamma \label{w2}
\end{aligned}
\end{empheq}
for some nonempty open sets $\omega' \Subset \omega \Subset \Omega$ and some positive constants $C_0, c >0$. Since $\eta^0\rvert_\Gamma=0$, we further have
\begin{equation}\label{eqgn}
\nabla \eta^0 =(\partial_\nu \eta^0) \nu\qquad \text{ on } \Gamma. 
\end{equation}
Henceforth, $A\in C^1\left(\overline{\Omega}; \mathbb{R}^{N\times N}\right)$ and $D \in C^1\left(\Gamma; \mathbb{R}^{N\times N}\right)$ designate two symmetric and uniformly elliptic matrices satisfying \eqref{uellipA} and \eqref{uellipD}. We recall the following key lemma:
\begin{lemma}[see \cite{ACMO'20}] \label{lm0} Let $\psi$ be any smooth function.
\begin{enumerate}[label=(\roman*)]
\item The following identity holds
\begin{equation}\label{eqconormal}
(\partial_\nu^A \psi)^2 -(A\nabla_\Gamma \psi\cdot \nu)^2=\left|A^{\frac{1}{2}}\nu\right|^2 \left(\left|A^{\frac{1}{2}}\nabla \psi\right|^2-\left|A^{\frac{1}{2}}\nabla_\Gamma \psi\right|^2 \right). 
\end{equation}
\item Let $c$ be the same constant in \eqref{w2}. Then
\begin{equation}\label{ceqlm1}
\partial_\nu^A \eta^0 \leq \beta \partial_\nu \eta^0 \leq -c \beta <0. 
\end{equation}
\end{enumerate}
\end{lemma}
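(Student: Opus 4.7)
The plan is to exploit the orthogonal decomposition of the Euclidean gradient on the boundary, namely $\nabla\psi = \nabla_\Gamma\psi + (\partial_\nu\psi)\nu$, together with the symmetry of $A$. Both items reduce to direct algebraic manipulations once this decomposition is plugged in; no analysis beyond the definitions of conormal derivative and tangential gradient is needed.

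For (i), I would introduce the shorthands $a := A\nabla_\Gamma\psi\cdot\nu$, $b := \partial_\nu\psi$, and $c := A\nu\cdot\nu = |A^{1/2}\nu|^2$. Inserting the decomposition into the conormal derivative,
$$\partial_\nu^A\psi = A\nabla\psi\cdot\nu = A\nabla_\Gamma\psi\cdot\nu + (\partial_\nu\psi)\,A\nu\cdot\nu = a + bc,$$
so the LHS of \eqref{eqconormal} equals $(a+bc)^2 - a^2 = 2abc + b^2 c^2 = c(2ab + b^2 c)$. For the RHS, expanding the bilinear form and using the symmetry of $A$ to collapse the two cross terms into $2ab$,
$$|A^{1/2}\nabla\psi|^2 = A(\nabla_\Gamma\psi + b\nu)\cdot(\nabla_\Gamma\psi + b\nu) = |A^{1/2}\nabla_\Gamma\psi|^2 + 2ab + b^2 c.$$
Multiplying the difference $|A^{1/2}\nabla\psi|^2 - |A^{1/2}\nabla_\Gamma\psi|^2 = 2ab + b^2 c$ by $c$ reproduces exactly the LHS, which establishes \eqref{eqconormal}.

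For (ii), I would first invoke \eqref{eqgn}, which gives $\nabla\eta^0 = (\partial_\nu\eta^0)\nu$ on $\Gamma$, hence
$$\partial_\nu^A\eta^0 = A\nabla\eta^0\cdot\nu = (\partial_\nu\eta^0)\,(A\nu\cdot\nu).$$
Uniform ellipticity \eqref{uellipA} applied at the unit vector $\nu$ yields $A\nu\cdot\nu\geq \beta > 0$. Since $\partial_\nu\eta^0 \leq -c < 0$ from \eqref{w2}, multiplying the ellipticity lower bound by this negative factor reverses the inequality and produces $\partial_\nu^A\eta^0 \leq \beta\,\partial_\nu\eta^0 \leq -c\beta < 0$, which is \eqref{ceqlm1}.

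I do not anticipate any genuine obstacle: both items are short algebraic identities at a boundary point. The only subtle point is tracking the sign reversal in (ii), where the positive lower bound $A\nu\cdot\nu \geq \beta$ is multiplied by the \emph{negative} factor $\partial_\nu\eta^0$, which flips the direction of the inequality and is what allows the estimate $\partial_\nu^A\eta^0 \leq \beta\,\partial_\nu\eta^0$ to go through.
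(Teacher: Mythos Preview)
Your proof is correct. Note that the paper does not actually prove this lemma; it is stated with the reference ``see \cite{ACMO'20}'' and recalled without argument. Your direct approach---exploiting the orthogonal decomposition $\nabla\psi=\nabla_\Gamma\psi+(\partial_\nu\psi)\nu$ and the symmetry of $A$ for part~(i), and combining \eqref{eqgn} with the ellipticity lower bound \eqref{uellipA} for part~(ii)---is exactly the elementary computation one would expect and is presumably the same as in the cited reference.
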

For $t_0, t_1 \in \mathbb{R}$, $0<t_0<t_1 \leq T$, we set
$$\Omega_{t_0,t_1}:=(t_0,t_1)\times \Omega, \quad \Gamma_{t_0,t_1}:=(t_0,t_1)\times \Gamma, \quad\omega_{t_0,t_1}:=(t_0,t_1)\times \omega.$$
Now, let us consider the following weight functions
\begin{align*}
\alpha(t,x)&=\frac{\mathrm{e}^{2\lambda \|\eta^0\|_\infty}- \mathrm{e}^{\lambda \eta^0(x)}}{\gamma(t)},\\
\xi(t,x)&=\frac{\mathrm{e}^{\lambda \eta^0(x)}}{\gamma(t)}, \qquad \gamma(t)=(t-t_0)(t_1-t)
\end{align*}
for all $(t,x)\in \overline{\Omega}_{t_0, t_1}$, where $\lambda \geq 1$ is a large parameter depending only on $\Omega$ and $\omega$.

For $(z,z_\Gamma) \in \mathbb{E}_1$, we introduce the differential operators
\begin{align*}
L z &:=\partial_t z -\dv (A(x) \nabla z), &&\quad\text{ in }\Omega_T,\\
L_\Gamma(z_\Gamma, z) &:=\partial_t z_\Gamma -\dv_\Gamma (D(x)\nabla_\Gamma z_\Gamma) + \partial_\nu^A z,  &&\quad\text{ on }\Gamma_T.
\end{align*}
We set
\begin{equation*}
\sigma(x):=A(x)\nabla \eta^0(x) \cdot \nabla\eta^0(x), \qquad x \in \overline{\Omega}.
\end{equation*}
For simplicity, we will often write $z$ instead of $z_\Gamma$ on $\Gamma_T$. We also denote $Z:=(z,z_\Gamma)$ (idem for other capital letters). Let $\tau \in \mathbb{R}$ and $s>0$. Set $\psi=\mathrm{e}^{-s \alpha} \xi^{\frac{\tau}{2}} z$ and introduce the following operators
\begin{align}
M_1^{(\tau)} \psi &=  2\lambda \left(s\xi +\frac{\tau}{2}\right) A(x)\nabla \eta^0\cdot \nabla\psi +\partial_t \psi :=M_{1,1} \psi+ M_{1,2} \psi, \label{m1}\\
M_2^{(\tau)} \psi &= -\lambda^2 s^2\xi^2 \sigma \psi -\dv(A(x)\nabla \psi) + \left(\frac{\tau}{2} - s \alpha\right) (\partial_t \log \gamma)\psi\\
& :=M_{2,1} \psi+ M_{2,2} \psi + M_{2,3} \psi, \nonumber\\
N_1^{(\tau)} \psi &= \partial_t \psi - \lambda \left(s\xi +\frac{\tau}{2}\right) \partial_\nu^A \eta^0 \psi :=N_{1,1} \psi+ N_{1,2} \psi,\\
N_2^{(\tau)} \psi &= -\dv_\Gamma(D(x)\nabla_\Gamma \psi) + \left(\frac{\tau}{2}- s \alpha \right) (\partial_t \log \gamma)\psi + \partial_\nu^A \psi \label{n2}\\
& :=N_{2,1} \psi+ N_{2,2} \psi + N_{2,3} \psi. \nonumber
\end{align}
We further set
\begin{align*}
I_\Omega(\tau, z) &= \bigintssss_{\Omega_{t_0,t_1}} \mathrm{e}^{-2s\alpha} (s\xi)^{\tau-1} \left(|\partial_t z|^2 + |\dv(A(x)\nabla z)|^2 \right) \,\d x\,\d t\\
&\qquad + \lambda^2 \bigintssss_{\Omega_{t_0,t_1}} \mathrm{e}^{-2s\alpha} (s\xi)^{\tau +1} |\nabla z|^2 \,\d x\,\d t \\
& \qquad + \lambda^4 \bigintssss_{\Omega_{t_0,t_1}} \mathrm{e}^{-2s\alpha} (s\xi)^{\tau +3} |z|^2 \,\d x\,\d t ,\\
I_\Gamma(\tau, z_\Gamma, z) &= \bigintssss_{\Gamma_{t_0,t_1}} \mathrm{e}^{-2s\alpha} (s\xi)^{\tau-1} \left(|\partial_t z_\Gamma|^2 + |\dv_\Gamma(D(x)\nabla_\Gamma z_\Gamma)|^2 \right) \,\d S\,\d t\\
& + \lambda \bigintssss_{\Gamma_{t_0,t_1}} \mathrm{e}^{-2s\alpha} (s\xi)^{\tau +1} |\nabla_\Gamma z_\Gamma|^2 \,\d S\,\d t \\
& + \lambda^3 \bigintssss_{\Gamma_{t_0,t_1}} \mathrm{e}^{-2s\alpha} (s\xi)^{\tau +3} |z_\Gamma|^2 \,\d S\,\d t + \lambda \bigintssss_{\Gamma_{t_0,t_1}} \mathrm{e}^{-2s\alpha} (s\xi)^{\tau +1} |\partial_\nu^A z|^2 \,\d S\,\d t.
\end{align*}

\begin{lemma}[Carleman estimate] \label{lm2}
Let $\tau \in \mathbb{R}$. There are three positive constants $\lambda_1 =\lambda_1(\Omega,\omega), s_1=s_1(\Omega,\omega, \tau)$ and $C=C(\Omega,\omega, \tau)$ such that, for any $\lambda\geq \lambda_1$ and $s\geq s_1$, the following inequality holds
\begin{align}
I_\Omega(\tau, z) + I_\Gamma(\tau, z_\Gamma, z) &\leq C \left[\lambda^4\int_{\omega_{t_0,t_1}} \mathrm{e}^{-2s\alpha} (s\xi)^{\tau +3} |z|^2 \,\d x\,\d t \right. \nonumber\\
& \hspace{-2.7cm}\left. + \int_{\Omega_{t_0,t_1}} \mathrm{e}^{-2s\alpha} (s\xi)^{\tau} |Lz|^2 \,\d x\,\d t + \int_{\Gamma_{t_0,t_1}} \mathrm{e}^{-2s\alpha} (s\xi)^{\tau} |L_\Gamma (z_\Gamma, z)|^2 \,\d S\,\d t \right]\label{car11}
\end{align}
for all $Z=(z,z_\Gamma)\in \mathbb{E}_1$. For $\Psi=\mathrm{e}^{-s\alpha} \xi^{\frac{\tau}{2}} Z$ the inequality \eqref{car11} also yields an upper bound for $$\left\|M_1^{(\tau)} \psi\right\|_{L^2(\Omega_{t_0,t_1})}^2 + \left\|M_2^{(\tau)} \psi\right\|_{L^2(\Omega_{t_0,t_1})}^2 + \left\|N_1^{(\tau)} \psi_\Gamma\right\|_{L^2(\Gamma_{t_0,t_1})}^2 + \left\|N_2^{(\tau)} \psi_\Gamma \right\|_{L^2(\Gamma_{t_0,t_1})}^2.$$
\end{lemma}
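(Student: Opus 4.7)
The plan is to adapt the Fursikov--Imanuvilov method to the coupled bulk--surface setting, exploiting the splittings already built into the operators $M_i^{(\tau)}, N_i^{(\tau)}$. First I would carry out the weight conjugation: set $\psi := \mathrm{e}^{-s\alpha}\xi^{\tau/2}z$ on $\Omega_T$ and $\psi_\Gamma := \mathrm{e}^{-s\alpha}\xi^{\tau/2}z_\Gamma$ on $\Gamma_T$, and compute by the product rule to get
$$\mathrm{e}^{-s\alpha}\xi^{\tau/2} L z = M_1^{(\tau)}\psi + M_2^{(\tau)}\psi + R\psi, \quad \mathrm{e}^{-s\alpha}\xi^{\tau/2} L_\Gamma(z_\Gamma,z) = N_1^{(\tau)}\psi_\Gamma + N_2^{(\tau)}\psi_\Gamma + R_\Gamma \psi_\Gamma,$$
with remainders $R, R_\Gamma$ of strictly lower order in $s, \lambda, \xi$ than the leading terms, hence absorbable once $\lambda$ and $s$ are large. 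The conjugation of the bulk trace $\partial_\nu^A z$ produces exactly the coupling piece $N_{2,3}\psi = \partial_\nu^A\psi$.

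Second, squaring and expanding yields
$$\|M_1^{(\tau)}\psi\|_{L^2(\Omega_{t_0,t_1})}^2 + \|M_2^{(\tau)}\psi\|_{L^2(\Omega_{t_0,t_1})}^2 + 2\langle M_1^{(\tau)}\psi, M_2^{(\tau)}\psi\rangle_{L^2(\Omega_{t_0,t_1})} = \|\mathrm{e}^{-s\alpha}\xi^{\tau/2} L z - R\psi\|_{L^2(\Omega_{t_0,t_1})}^2,$$
and likewise on $\Gamma_{t_0,t_1}$. The core of the argument is to expand the mixed products $\sum_{i,j}\langle M_{1,i}\psi, M_{2,j}\psi\rangle$ and $\sum_{i,j}\langle N_{1,i}\psi_\Gamma, N_{2,j}\psi_\Gamma\rangle$ by repeated integration by parts in $t$ and $x$, using the surface divergence formula \eqref{sdt} on the boundary side. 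After exploiting $|\nabla \eta^0| \ge C_0$ on $\overline{\Omega \setminus \omega'}$, the bulk computation produces the dominant positive quantities $\lambda^4 \int (s\xi)^3 |\psi|^2$ and $\lambda^2 \int s\xi |\nabla \psi|^2$, while the surface computation delivers $\lambda^3 \int (s\xi)^3 |\psi_\Gamma|^2$ and $\lambda \int s\xi |\nabla_\Gamma \psi_\Gamma|^2$.

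Third, and this is the principal obstacle, the interior integration by parts on $M_{2,2}\psi = -\dv(A\nabla\psi)$ leaves a family of boundary contributions on $\Gamma_{t_0,t_1}$: dominantly $-2\lambda^3 \int_{\Gamma_{t_0,t_1}} (s\xi)^3 \sigma\, \partial_\nu^A \eta^0\, |\psi_\Gamma|^2$ together with terms in $|\partial_\nu^A \psi|^2$ and mixed products $\partial_\nu^A \psi \cdot \psi_\Gamma$. The sign property $\partial_\nu^A \eta^0 \le -c\beta <0$ from Lemma~\ref{lm0}(ii) turns the first into a welcome positive contribution dominating any boundary $L^2$ remainder. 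To process the normal-derivative squares I would invoke Lemma~\ref{lm0}(i), writing $|\partial_\nu^A \psi|^2 = |A^{1/2}\nu|^2(|A^{1/2}\nabla \psi|^2 - |A^{1/2}\nabla_\Gamma \psi|^2) + (A\nabla_\Gamma \psi \cdot \nu)^2$, which transfers the normal mass to tangential terms absorbable by the surface estimate. The mixed couplings through $\partial_\nu^A \psi = N_{2,3}\psi$ are balanced against $N_1^{(\tau)}\psi_\Gamma$ via Young's inequality, and all remaining lower-order residues are swallowed by the leading positive bulk and surface quantities once $\lambda \ge \lambda_1$ and $s \ge s_1$ are taken large.

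Fourth, since $|\nabla \eta^0|$ may degenerate on $\omega'$, the estimate so far holds only with integrals truncated to $\Omega \setminus \omega'$. To bring in the observation on $\omega$, I would introduce a cutoff $\chi \in C^\infty_c(\omega)$ with $\chi \equiv 1$ on $\omega'$ and integrate by parts $\int \chi\, \mathrm{e}^{-2s\alpha}(s\xi)^{\tau+1} |\nabla z|^2$, trading the gradient for a factor of $z$ at a cost of $\lambda^2$; this absorbs the $\omega'$-gradient remainder into $\lambda^4 \int_{\omega_{t_0,t_1}} \mathrm{e}^{-2s\alpha}(s\xi)^{\tau+3} |z|^2$. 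Unwinding $\psi = \mathrm{e}^{-s\alpha}\xi^{\tau/2} z$ by the product rule in time and space converts the weighted norms of $\psi, \nabla \psi, \partial_t \psi, \dv(A\nabla\psi)$ (and their surface analogues) into exactly the quantities making up $I_\Omega(\tau, z) + I_\Gamma(\tau, z_\Gamma, z)$. The supplementary bound on $\|M_i^{(\tau)}\psi\|^2 + \|N_i^{(\tau)}\psi_\Gamma\|^2$ announced at the end of the lemma then follows automatically, since those norms already sit on the left-hand side of the identity used in step two.
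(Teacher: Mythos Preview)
Your proposal is correct and follows essentially the same route as the paper's proof in Appendix~\ref{app}: conjugate by $\mathrm{e}^{-s\alpha}\xi^{\tau/2}$, expand the cross terms $\langle M_{1,i}\psi,M_{2,j}\psi\rangle$ and $\langle N_{1,i}\psi_\Gamma,N_{2,j}\psi_\Gamma\rangle$ via integration by parts, use both parts of Lemma~\ref{lm0} to sign and convert the boundary residues, and localize via $\sigma\ge C$ on $\overline{\Omega\setminus\omega'}$. One small inaccuracy in your step four: in the paper's version the gradient term on the left carries no $\sigma$-weight and hence does not degenerate on $\omega'$; the only local remainder is the zeroth-order one $s^3\lambda^4\int_{\omega'_T}\xi^3\psi^2$, so the cutoff trick you describe is unnecessary (though harmless), and the paper instead recovers the positive gradient contribution by an auxiliary multiplication of the identity $M_1^{(\tau)}\psi+M_2^{(\tau)}\psi=\tilde f$ against $s\lambda^2\xi\sigma\psi$.
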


In the case when $\tau=0$, the above lemma has been proven in \cite[Lemma 2.4]{ACMO'20} with help of Lemma \ref{lm0}. The general result for arbitrary $\tau$ will follow by taking into account the extending decomposition given by \eqref{m1}-\eqref{n2}. The proof is postponed to \ref{app}.

\begin{remark}
The Carleman estimate \eqref{car11} does not yield the desired stability estimate for the semilinear coupled system \eqref{eq1to6}, since we need appropriate powers in $s$ and $\lambda$ to absorb some terms on the right-hand side. To this end, we need a modified form of Carleman estimate with one observation.
\end{remark}
Consider the following system.
\begin{empheq}[left =\empheqlbrace]{alignat=2}
\begin{aligned}
&\partial_t y = \dv(A_1(x) \nabla y) + p_{11}(x)y + p_{12}(x)z + f_1, &\text{in } \Omega_T , \\
&\partial_t z = \dv(A_2(x) \nabla z) + p_{21}(x)y + p_{22}(x)z + f_2, &\text{in } \Omega_T , \\
&\partial_t y_{\Gamma}= \dv_{\Gamma}(D_1(x)\nabla_\Gamma y_{\Gamma}) -\partial_{\nu}^{A_1} y + q_{11}(x)y_{\Gamma} + q_{12}(x)z_{\Gamma} + g_1, &\text{on } \Gamma_T, \\
&\partial_t z_{\Gamma}= \dv_{\Gamma} (D_2(x)\nabla_\Gamma z_{\Gamma}) -\partial_{\nu}^{A_2} z + q_{21}(x)y_{\Gamma} + q_{22}(x)z_{\Gamma} + g_2, &\text{on } \Gamma_T, \\
& y_{\Gamma}(t,x) = y_{|\Gamma}(t,x), \qquad  z_{\Gamma}(t,x) = z_{|\Gamma}(t,x),  &\text{on } \Gamma_T, \\
&(y,y_{\Gamma})\rvert_{t=0}=(y_0, y_{0,\Gamma}), \qquad  (z,z_{\Gamma})\rvert_{t=0}=(z_0, z_{0,\Gamma}),   &\Omega\times\Gamma.  \label{1eq1to6}
\end{aligned}
\end{empheq}

\begin{lemma}
There exist three constants $\lambda_1=\lambda_1(\Omega, \omega) \ge 1, s_1=s_1\left(T,\lambda_1\right)>1$ and $C=C\left(\Omega, \omega, R, t_0, t_1, T, p_0\right)$ such that, for any $\lambda \ge \lambda_1$ and $s \ge s_1$ with a fixed $0<\epsilon<1$, the following inequality holds
\begin{align}
\lambda^{-4+\epsilon}[I_\Omega(-3, y)+ I_\Gamma(-3,y_\Gamma,y)] + I_\Omega(0, z) + I_\Gamma(0, z_\Gamma,z) \nonumber\\
& \hspace{-8.5cm}\leq C \left[ s^{4} \lambda^{4+\epsilon} \int_{\omega_{t_0, t_1}} \mathrm{e}^{-2 s \alpha} \xi^4 |z|^2 \,\d x\,\d t \right. \nonumber\\
& \hspace{-8 cm} + s^{-3} \lambda^{-4+\epsilon} \left(\int_{\Omega_{t_0, t_1}} \mathrm{e}^{-2 s \alpha} \xi^{-3} |f_1|^2 \,\d x\,\d t + \int_{\Gamma_{t_0, t_1}} \mathrm{e}^{-2 s \alpha} \xi^{-3} |g_1|^2 \,\d S\,\d t \right) \nonumber\\
& \hspace{-8 cm} + \left. \lambda^{2 \epsilon} \left(\int_{\Omega_{t_0, t_1}} \mathrm{e}^{-2 s \alpha} |f_2|^2 \,\d x\,\d t + \int_{\Gamma_{t_0, t_1}} \mathrm{e}^{-2 s \alpha} |g_2|^2 \,\d S\,\d t \right) \right]. \label{shcar}
\end{align}
\end{lemma}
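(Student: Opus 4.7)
The idea is to apply Lemma~\ref{lm2} separately to $(y,y_\Gamma)$ and to $(z,z_\Gamma)$ with two different choices of $\tau$, to combine the two estimates with an adequate prefactor, and finally to trade the missing observation on $y$ for one on $z$ by exploiting the non-degeneracy $p_{21},q_{21}\ge p_0>0$.

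First I treat $(y,y_\Gamma)$ as a single dynamic-boundary parabolic equation with sources $L_1y = p_{11}y+p_{12}z+f_1$ in $\Omega$ and $L_{1,\Gamma}(y_\Gamma,y)=q_{11}y_\Gamma+q_{12}z_\Gamma+g_1$ on $\Gamma$, and apply Lemma~\ref{lm2} with $\tau=-3$. Independently, the same lemma is applied with $\tau=0$ to $(z,z_\Gamma)$, whose sources are $L_2z=p_{21}y+p_{22}z+f_2$ and $L_{2,\Gamma}(z_\Gamma,z)=q_{21}y_\Gamma+q_{22}z_\Gamma+g_2$. Multiplying the $y$-estimate by $\lambda^{-4+\epsilon}$ and adding it to the $z$-estimate produces the left-hand side of \eqref{shcar}. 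On the right one collects: (i) the two local observations $\lambda^{\epsilon}\int_{\omega_{t_0,t_1}}\mathrm{e}^{-2s\alpha}|y|^2$ and $s^3\lambda^{4}\int_{\omega_{t_0,t_1}}\mathrm{e}^{-2s\alpha}\xi^3|z|^2$; (ii) the desired source integrals in $f_1,g_1,f_2,g_2$, already matching the target up to harmless factors of $\lambda^{\epsilon}\ge 1$; (iii) interior coupling contributions of the form $R^2\int\mathrm{e}^{-2s\alpha}(s\xi)^{-3}(|y|^2+|z|^2)$ and $R^2\int\mathrm{e}^{-2s\alpha}(|y|^2+|z|^2)$ together with their $\Gamma$-analogues; and (iv) the more delicate boundary term $R^2\int_{\Gamma_{t_0,t_1}}\mathrm{e}^{-2s\alpha}|y_\Gamma|^2$ coming from $L_{2,\Gamma}$.

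Next I absorb every term in (iii)–(iv) for $s,\lambda$ large (depending on $R$). The $|z|^2$ and $|z_\Gamma|^2$ parts are dominated by the top-order contributions $\lambda^{4}s^{3}\int\mathrm{e}^{-2s\alpha}\xi^{3}|z|^2$ and $\lambda^{3}s^{3}\int\mathrm{e}^{-2s\alpha}\xi^{3}|z_\Gamma|^2$ of $I_\Omega(0,z)+I_\Gamma(0,z_\Gamma,z)$, while the $|y|^2$ interior part is swallowed by the $\lambda^{\epsilon}\int\mathrm{e}^{-2s\alpha}|y|^2$ sitting inside $\lambda^{-4+\epsilon}I_\Omega(-3,y)$; the lower bound $\xi\ge c>0$ and the gaps in the powers of $s\xi$ make each absorption quantitative. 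For (iv) I use $q_{21}\ge p_0>0$ to solve the dynamic boundary equation for $y_\Gamma$,
\[
y_\Gamma = q_{21}^{-1}\bigl(\partial_t z_\Gamma - \dv_\Gamma(D_2\nabla_\Gamma z_\Gamma) + \partial_\nu^{A_2}z - q_{22}z_\Gamma - g_2\bigr),
\]
and re-absorb the resulting $\partial_t z_\Gamma$, $\dv_\Gamma$, $\partial_\nu^{A_2}z$ and $z_\Gamma$ integrals into $I_\Gamma(0,z_\Gamma,z)$ via the weights $(s\xi)^{-1}$ and $(s\xi)^{\pm 1}$ already present on the left.

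The remaining local observation of $y$ is eliminated by the same trick applied in the bulk: from the $z$-equation and $p_{21}\ge p_0>0$ one has $y=p_{21}^{-1}(\partial_t z-\dv(A_2\nabla z)-p_{22}z-f_2)$ in $\Omega_T$, hence
\[
\lambda^{\epsilon}\int_{\omega_{t_0,t_1}}\mathrm{e}^{-2s\alpha}|y|^2\le C\lambda^{\epsilon}\int_{\omega_{t_0,t_1}}\mathrm{e}^{-2s\alpha}\bigl(|\partial_t z|^2+|\dv(A_2\nabla z)|^2+|z|^2+|f_2|^2\bigr).
\]
The $|f_2|^2$ piece joins the source term on the right of \eqref{shcar}. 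To convert the two derivative observations into a single observation of $z$, I pick $\chi\in C_c^\infty(\Omega)$ with $\chi\equiv 1$ on $\omega$ and $\operatorname{supp}\chi\Subset\Omega$, multiply the $z$-equation successively by $\chi^{2}\mathrm{e}^{-2s\alpha}\partial_t z$ and by $\chi^{2}\mathrm{e}^{-2s\alpha}\dv(A_2\nabla z)$, and integrate by parts in both $x$ and $t$. The support condition on $\chi$ kills all spatial boundary traces, while $\mathrm{e}^{-2s\alpha}$ vanishes at $t=t_0,t_1$; each derivative falling on the weight contributes a factor of order $s\lambda\xi$, which precisely produces the prescribed power $s^{4}\lambda^{4+\epsilon}\xi^{4}$ on the right of \eqref{shcar} after Cauchy–Schwarz and self-absorption of small multiples of $\int\chi^{2}\mathrm{e}^{-2s\alpha}(|\partial_t z|^{2}+|\dv(A_2\nabla z)|^{2})$.

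I expect the main difficulty to lie in this last cut-off step: one must simultaneously upgrade $|\partial_t z|^{2}+|\dv(A_2\nabla z)|^{2}$ to $|z|^{2}$ with the exact exponent $s^{4}\lambda^{4+\epsilon}\xi^{4}$ demanded by the statement, and prevent the intern–boundary coupling from creating boundary integrals that escape the left-hand side. The bulk of the work is therefore the careful accounting of the $s$, $\lambda$, $\xi$ powers produced by every integration by parts and the verification that each of them can be swallowed by a top-order term already available inside $I_\Omega$ or $I_\Gamma$.
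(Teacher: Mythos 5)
Your opening move is the paper's: apply Lemma \ref{lm2} with $\tau=-3$ to $(y,y_\Gamma)$ and $\tau=0$ to $(z,z_\Gamma)$, weight the first by $\lambda^{-4+\epsilon}$, add, and absorb the zeroth-order coupling terms. But the decisive step --- eliminating the local observation $\lambda^{\epsilon}\int_{\omega_{t_0,t_1}}\mathrm{e}^{-2s\alpha}|y|^2$ --- is done in a way that does not close. You substitute $y=p_{21}^{-1}(\partial_t z-\dv(A_2\nabla z)-p_{22}z-f_2)$ and \emph{square}, which leaves you with $\lambda^{\epsilon}\int_{\omega}\mathrm{e}^{-2s\alpha}(|\partial_t z|^2+|\dv(A_2\nabla z)|^2)$ and forces the extra cut-off/energy step you describe at the end. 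That step is circular: multiplying the $z$-equation by $\chi^2\mathrm{e}^{-2s\alpha}\partial_t z$ reintroduces the term $\int\chi^2\mathrm{e}^{-2s\alpha}p_{21}y\,\partial_t z$, whose Young splitting regenerates exactly the $\lambda^{\epsilon}\int_\omega\mathrm{e}^{-2s\alpha}|y|^2$ you are trying to remove; moreover the time integration by parts produces weights of order $s\xi^2|\nabla z|^2$ that are not dominated by the $\lambda^2(s\xi)|\nabla z|^2$ term of $I_\Omega(0,z)$ near $t_0,t_1$. The paper avoids all of this by keeping the substitution \emph{linear} in the derivatives of $z$: it writes $\chi p_{21}|y|^2=\chi y\,(\partial_t z-\dv(A_2\nabla z)-p_{22}z-f_2)$, integrates by parts to throw $\partial_t$ and $\dv(A_2\nabla\cdot)$ onto $\mathrm{e}^{-2s\alpha}\chi y$, and then applies Young's inequality with the splitting $(s^2\lambda^2\xi^2 z)(s^{-2}\lambda^{-2}\xi^{-2}\partial_t y)$, so that the $y$-side lands in the $(s\xi)^{-4}$-weighted block $\int\mathrm{e}^{-2s\alpha}(s\xi)^{-4}(|\partial_t y|^2+|\dv(A_1\nabla y)|^2)$ of $\lambda^{-4+\epsilon}I_\Omega(-3,y)$ --- this is the entire reason for choosing $\tau=-3$ for the $y$-component --- while the $z$-side becomes the stated $s^4\lambda^{4+\epsilon}\xi^4|z|^2$ over $\omega$. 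Your plan never uses the negative-power block of $I_\Omega(-3,y)$, so the exponent bookkeeping you defer to "the main difficulty" cannot in fact be made to work as written.

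Your treatment of the boundary coupling term is also off. Solving the dynamic boundary equation for $y_\Gamma$ and squaring yields $\int_{\Gamma_{t_0,t_1}}\mathrm{e}^{-2s\alpha}\bigl(|\partial_t z_\Gamma|^2+|\dv_\Gamma(D_2\nabla_\Gamma z_\Gamma)|^2\bigr)$ with \emph{unit} weight, whereas $I_\Gamma(0,z_\Gamma,z)$ controls these quantities only with the weight $(s\xi)^{-1}\lesssim s^{-1}$; the absorption therefore goes in the wrong direction and fails for large $s$. No such manoeuvre is needed: since the potentials are bounded, the paper absorbs $R^2\int_{\Gamma_{t_0,t_1}}\mathrm{e}^{-2s\alpha}|y_\Gamma|^2$ directly into the zeroth-order term of $\lambda^{-4+\epsilon}I_\Gamma(-3,y_\Gamma,y)$ (and its interior analogue into $\lambda^{-4+\epsilon}I_\Omega(-3,y)$) for $s$, $\lambda$ large, using only $\xi\ge c>0$. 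The structural hypothesis $p_{21},q_{21}\ge p_0$ is needed only once, in the interior, to divide by $p_{21}$ on the support of the cut-off.
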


\begin{proof}
We adapt the strategy in \cite{CGRY'12} to our case by treating several new boundary terms. Applying Carleman estimate \eqref{car11} for $\tau=-3$ to $(y,y_\Gamma)$ and for $\tau=0$ to $(z,z_\Gamma)$, we have
\begin{align}
\lambda^{-4+\epsilon}[I_\Omega(-3, y)+ I_\Gamma(-3,y_\Gamma,y)] + I_\Omega(0, z) + I_\Gamma(0, z_\Gamma,z) \nonumber\\
& \hspace{-8.5cm}\leq C \left[\lambda^{\epsilon} \int_{\omega_{t_0, t_1}} \mathrm{e}^{-2 s \alpha} |y|^2 \,\d x\,\d t + s^{4} \lambda^{4+\epsilon} \int_{\omega_{t_0, t_1}} \mathrm{e}^{-2 s \alpha} \xi^4 |z|^2 \,\d x\,\d t \right. \nonumber\\
& \hspace{-8 cm} + s^{-3} \lambda^{-4+\epsilon} \left(\int_{\Omega_{t_0, t_1}} \mathrm{e}^{-2 s \alpha} \xi^{-3} (|p_{11}y|^2+|p_{12}z|^2 +|f_1|^2) \,\d x\,\d t \right. \nonumber\\
&\hspace{-6.2 cm} \left. + \int_{\Gamma_{t_0, t_1}} \mathrm{e}^{-2 s \alpha} \xi^{-3} (|q_{11}y_\Gamma|^2+|q_{12}z_\Gamma|^2 +|g_1|^2) \,\d S\,\d t \right) \nonumber\\
& \hspace{-5.8 cm} + \int_{\Omega_{t_0, t_1}} \mathrm{e}^{-2 s \alpha} (|p_{21}y|^2+|p_{22}z|^2 +|f_2|^2) \,\d x\,\d t \nonumber\\
& \hspace{-5.8 cm} \left.  + \int_{\Gamma_{t_0, t_1}} \mathrm{e}^{-2 s \alpha} (|q_{21}y_\Gamma|^2 +|q_{22}z_\Gamma|^2 +|g_2|^2) \,\d S\,\d t \right]. \label{e1}
\end{align}
Next we estimate the first term $\mathbf{I}$ on the right hand side of \eqref{e1} by means of the second one. To this end, choose $\omega' \Subset \omega'' \Subset \omega$ and introduce a cut-off function $\chi =\chi(x)$ such that
$$\chi \in C^2(\Omega), \qquad \mathrm{supp}\, \chi \subset \omega'', \qquad \chi \equiv 1 \quad \text{ in } \omega', \qquad 0<\chi \le 1 \quad \text{ in } \overline{\omega''},$$
in order to estimate the term
$$\mathbf{I}':=\lambda^{\epsilon} \int_{\omega_{t_0, t_1}} \mathrm{e}^{-2 s \alpha} \chi p_{21} |y|^2 \,\d x\,\d t.$$
By virtue of $\eqref{1eq1to6}_2$, we have
\begin{align*}
\mathbf{I}' &= \lambda^{\epsilon} \int_{\omega_{t_0, t_1}} \mathrm{e}^{-2 s \alpha} \chi (\partial_t z -\dv(A_2(x) \nabla z) - p_{22}(x)z - f_2) y \,\d x\,\d t \\
& =\lambda^{\epsilon} \int_{\omega_{t_0, t_1}} \mathrm{e}^{-2 s \alpha} \chi (\partial_t z) y \,\d x\,\d t - \lambda^{\epsilon} \int_{\omega_{t_0, t_1}} \mathrm{e}^{-2 s \alpha} \chi \dv(A_2(x) \nabla z) y \,\d x\,\d t\\
& \quad - \lambda^{\epsilon} \int_{\omega_{t_0, t_1}} \mathrm{e}^{-2 s \alpha} \chi p_{22}(x)z y \,\d x\,\d t - \lambda^{\epsilon} \int_{\omega_{t_0, t_1}} \mathrm{e}^{-2 s \alpha} \chi f_2 y \,\d x\,\d t\\
&=: \mathbf{I}_1 + \mathbf{I}_2 + \mathbf{I}_3 +\mathbf{I}_4.
\end{align*}
We start with the integrals $\mathbf{I}_3$ and $\mathbf{I}_4$. Since $\chi, p_{22}$ and $\xi^{-1}$ are bounded, by Young's inequality for $(\frac{1}{\sqrt{\gamma}} \xi^2 \chi p_{22} z)(\sqrt{\gamma} \xi^{-2}y)$ and $(\lambda^{\epsilon} f_2)(\chi y)$, we get
\begin{align*}
|\mathbf{I}_3| &\le C \left[\frac{1}{2\gamma} s^{4} \lambda^{4+\epsilon} \int_{\omega_{t_0, t_1}} \mathrm{e}^{-2 s \alpha} \xi^4 |z|^2 \,\d x\,\d t + \frac{\gamma}{2} \lambda^\epsilon \int_{\Omega_{t_0, t_1}} \mathrm{e}^{-2 s \alpha} |y|^2 \,\d x\,\d t\right], \\
|\mathbf{I}_4| &\le C \left[\lambda^{2 \epsilon} \int_{\omega_{t_0, t_1}} \mathrm{e}^{-2 s \alpha} |f_2|^2 \,\d x\,\d t + \int_{\omega_{t_0, t_1}} \mathrm{e}^{-2 s \alpha} |y|^2 \,\d x\,\d t\right]
\end{align*}
for all $\gamma>0$. The second terms in the above inequalities can be absorbed into the left-hand side thanks to the third term of $\lambda^{-4+\epsilon} I_\Omega(-3, y)$ and $\lambda$ large enough. Therefore,
\begin{align*}
|\mathbf{I}_3| &\le C s^{4} \lambda^{4+\epsilon} \int_{\omega_{t_0, t_1}} \mathrm{e}^{-2 s \alpha} \xi^4 |z|^2 \,\d x\,\d t + \frac{1}{2} \lambda^{-4+\epsilon} I_\Omega(-3, y), \\
|\mathbf{I}_4| &\le C \lambda^{2 \epsilon} \int_{\Omega_{t_0, t_1}} \mathrm{e}^{-2 s \alpha} |f_2|^2 \,\d x\,\d t + \frac{1}{2} \lambda^{-4+\epsilon} I_\Omega(-3, y).
\end{align*}
Next we estimate $\mathbf{I}_1$. Integrating by parts with respect to $t$ we obtain
$$\mathbf{I}_1=-\lambda^{\epsilon} \int_{\omega_{t_0, t_1}} \mathrm{e}^{-2 s \alpha} \chi z \partial_t y \,\d x\,\d t + 2 s \lambda^{\epsilon} \int_{\omega_{t_0, t_1}} \mathrm{e}^{-2 s \alpha} \chi (\partial_t \alpha) z y \,\d x\,\d t =: \mathbf{I}_{11}+\mathbf{I}_{12}.$$
By Young's inequality for $(\frac{1}{\sqrt{\gamma}}s^2 \lambda^2 \xi^2 \chi z)(\sqrt{\gamma} s^{-2} \lambda^{-2} \xi^{-2}\partial_t y)$, we derive
\begin{align*}
|\mathbf{I}_{11}| &\le \frac{1}{2\gamma} s^{4} \lambda^{4+\epsilon} \int_{\omega_{t_0, t_1}} \mathrm{e}^{-2 s \alpha} \xi^4 \chi^2 |z|^2 \,\d x\,\d t + \frac{\gamma}{2} s^{-4}\lambda^{-4+\epsilon} \int_{\Omega_{t_0, t_1}} \mathrm{e}^{-2 s \alpha} \xi^{-4} |\partial_t y|^2 \,\d x\,\d t \\
& \hspace{-0.5cm} \le C \left[\frac{1}{2\gamma} s^{4} \lambda^{4+\epsilon} \int_{\omega_{t_0, t_1}} \mathrm{e}^{-2 s \alpha} \xi^4 |z|^2 \,\d x\,\d t + \frac{\gamma}{2} s^{-4}\lambda^{-4+\epsilon} \int_{\Omega_{t_0, t_1}} \mathrm{e}^{-2 s \alpha} \xi^{-4} |\partial_t y|^2 \,\d x\,\d t\right].
\end{align*}
The last term of the above inequality can be absorbed into the left-hand side by the first term of $\lambda^{-4+\epsilon} I_\Omega(-3, y)$ by choosing $\gamma>0$ small enough. Moreover, using $|\partial_t \alpha|^2 \le C \xi^4$ and applying Young's inequality to $(s^{1/2} \lambda^{1/2} \chi |\partial_t \alpha| z)(s^{-1/2} \lambda^{-1/2} y)$, we obtain
\begin{align*}
|\mathbf{I}_{12}| &\le s\lambda^\epsilon \left[ s\lambda \int_{\omega_{t_0, t_1}} \mathrm{e}^{-2 s \alpha} \chi^2 |\partial_t \alpha|^2 |z|^2 \,\d x\,\d t + s^{-1}\lambda^{-1}\int_{\omega_{t_0, t_1}} \mathrm{e}^{-2 s \alpha} |y|^2 \,\d x\,\d t\right]\\
& \le C \left[ s^4 \lambda^{4+\epsilon} \int_{\omega_{t_0, t_1}} \mathrm{e}^{-2 s \alpha} \xi^4 |z|^2 \,\d x\,\d t + \lambda^{\epsilon-1}\int_{\omega_{t_0, t_1}} \mathrm{e}^{-2 s \alpha} |y|^2 \,\d x\,\d t\right].
\end{align*}
The last term of the above inequality can be absorbed into the left-hand side by the first term of $\lambda^{-4+\epsilon} I_\Omega(-3, y)$, since $\lambda^{\epsilon-1} \le \lambda^\epsilon$ ($\lambda \ge 1$). Hence,
$$|\mathbf{I}_{1}| \le C s^4 \lambda^{4+\epsilon} \int_{\omega_{t_0, t_1}} \mathrm{e}^{-2 s \alpha} \xi^4 |z|^2 \,\d x\,\d t + \frac{1}{2} \lambda^{-4+\epsilon} I_\Omega(-3, y).$$
To estimate $\mathbf{I}_2$, we use integration by parts in space to obtain
\begin{align*}
\mathbf{I}_2 &= - \lambda^{\epsilon} \int_{\omega_{t_0, t_1}} \dv(A_2(x) \nabla(\mathrm{e}^{-2 s \alpha} \chi y)) z \,\d x\,\d t \\
& \hspace{-0.3cm} = - \lambda^{\epsilon} \left[\int_{\omega_{t_0, t_1}} \mathrm{e}^{-2 s \alpha} \chi \dv(A_2(x) \nabla y) z \,\d x\,\d t + 2 s\lambda \int_{\omega_{t_0, t_1}} \mathrm{e}^{-2 s \alpha} \xi \dv(A_2(x) \nabla \eta^0)\chi y z \,\d x\,\d t \right.\\
& \hspace{1.4cm} \left. + \int_{\omega_{t_0, t_1}} \mathrm{e}^{-2 s \alpha} \dv(A_2(x) \nabla \chi) y z \,\d x\,\d t\right]\\
& \hspace{-0.3cm} \le C \left[\frac{1}{2\gamma} s^{4} \lambda^{4+\epsilon} \int_{\omega_{t_0, t_1}} \mathrm{e}^{-2 s \alpha} \xi^4 |z|^2 \,\d x\,\d t + \frac{\gamma}{2} s^{-4}\lambda^{-4+\epsilon} \int_{\Omega_{t_0, t_1}} \mathrm{e}^{-2 s \alpha} \xi^{-4} |\dv(A_2(x) \nabla y)|^2 \,\d x\,\d t\right.\\
& \hspace{1cm} \left. + \frac{\gamma}{2} \lambda^{\epsilon} \int_{\Omega_{t_0, t_1}} \mathrm{e}^{-2 s \alpha} |y|^2 \,\d x\,\d t\right].
\end{align*}
The last two terms in the above inequality can be absorbed into the left-hand side by first and third terms of $\lambda^{-4+\epsilon} I_\Omega(-3, y)$ respectively by choosing $\gamma$ small. Thus,
$$|\mathbf{I}_{2}| \le C s^4 \lambda^{4+\epsilon} \int_{\omega_{t_0, t_1}} \mathrm{e}^{-2 s \alpha} \xi^4 |z|^2 \,\d x\,\d t + \frac{1}{2} \lambda^{-4+\epsilon} I_\Omega(-3, y).$$
We deduce that
$$|\mathbf{I}_1|+|\mathbf{I}_2|+|\mathbf{I}_3| \le C s^{4} \lambda^{4+\epsilon} \int_{\omega_{t_0, t_1}} \mathrm{e}^{-2 s \alpha} \xi^4 |z|^2 \,\d x\,\d t + \frac{1}{2} \lambda^{-4+\epsilon} I_\Omega(-3, y).$$
Therefore, since $p_{21} \ge p_0 >0$ and $\chi_0 :=\min\limits_{\overline{\omega''}}\chi >0$, we obtain
\begin{align*}
|\mathbf{I}| &\le \frac{1}{p_0 \chi_0} |\mathbf{I}'| \\
& \hspace{-0.3cm}\le C s^{4} \lambda^{4+\epsilon} \int_{\omega_{t_0, t_1}} \mathrm{e}^{-2 s \alpha} \xi^4 |z|^2 \,\d x\,\d t + C\lambda^{2 \epsilon} \int_{\omega_{t_0, t_1}} \mathrm{e}^{-2 s \alpha} |f_2|^2 + \frac{1}{2} \lambda^{-4+\epsilon} I_\Omega(-3, y)\,\d x\,\d t .
\end{align*}

Now we estimate the terms that contain potentials in \eqref{e1}. Since $p_{11}, p_{12}, q_{11}, q_{12}, \xi^{-1}$ are bounded, $\xi \ge c>0$, $s^{-3} \lambda^{-4+\epsilon} \le \frac{\lambda^{\epsilon-1}}{2}$, $\lambda^{-4+\epsilon} \le \lambda^3$ for $\epsilon<1$, $s\ge 1$ and $\lambda$ large, then we obtain
\begin{align*}
&s^{-3} \lambda^{-4+\epsilon} \int_{\Omega_{t_0, t_1}} \mathrm{e}^{-2 s \alpha} \xi^{-3} (|p_{11}y|^2+|p_{12}z|^2) \,\d x\,\d t \\
& + s^{-3} \lambda^{-4+\epsilon} \int_{\Gamma_{t_0, t_1}} \mathrm{e}^{-2 s \alpha} \xi^{-3} (|q_{11}y_\Gamma|^2+|q_{12}z_\Gamma|^2) \,\d S\,\d t\\
& \hspace{-0.5cm}\le \frac{\lambda^{\epsilon-1}}{2} \left(\int_{\Omega_{t_0, t_1}} \mathrm{e}^{-2 s \alpha} |y|^2 \,\d x\,\d t + \int_{\Gamma_{t_0, t_1}} \mathrm{e}^{-2 s \alpha} |y_\Gamma|^2 \,\d S\,\d t \right)\\
& + \frac{s^3 \lambda^3}{2} \left(\int_{\Omega_{t_0, t_1}} \mathrm{e}^{-2 s \alpha} \xi^{3} |z|^2 \,\d x\,\d t + \int_{\Gamma_{t_0, t_1}} \mathrm{e}^{-2 s \alpha} \xi^{3} |z_\Gamma|^2 \,\d S\,\d t\right)\\
& \le \frac{1}{2} \left(\lambda^{-4+\epsilon} [I_\Omega(-3, y)+ I_\Gamma(-3, y_\Gamma, y)] + I_\Omega(0, z)+ I_\Gamma(0, z_\Gamma, z)\right).
\end{align*}
The right-hand side term in the above inequality can be absorbed by the left-hand side of \eqref{e1}. Finally, since the potentials are bounded, 
\begin{align*}
&\int_{\Omega_{t_0, t_1}} \mathrm{e}^{-2 s \alpha} (|p_{21}y|^2+|p_{22}z|^2) \,\d x\,\d t \nonumber\\
& + \int_{\Gamma_{t_0, t_1}} \mathrm{e}^{-2 s \alpha} (|q_{21}y_\Gamma|^2 +|q_{22}z_\Gamma|^2) \,\d S\,\d t\\
&  \hspace{-0.5cm}\le C \left(\int_{\Omega_{t_0, t_1}} \mathrm{e}^{-2 s \alpha} |y|^2 \,\d x\,\d t + \int_{\Gamma_{t_0, t_1}} \mathrm{e}^{-2 s \alpha} |y_\Gamma|^2 \,\d S\,\d t \right.\\
& \left. \qquad + \int_{\Omega_{t_0, t_1}} \mathrm{e}^{-2 s \alpha} |z|^2 \,\d x\,\d t + \int_{\Gamma_{t_0, t_1}} \mathrm{e}^{-2 s \alpha} |z_\Gamma|^2 \,\d S\,\d t \right)\\
& \le \frac{1}{2} \left(\lambda^{-4+\epsilon} [I_\Omega(-3, y)+ I_\Gamma(-3, y_\Gamma, y)] + I_\Omega(0, z)+ I_\Gamma(0, z_\Gamma, z)\right),
\end{align*}
where we have chosen $\lambda$ and $s$ large so that $C\le \frac{\lambda^{\epsilon-1}}{2}$ and $C\le \frac{s^3\lambda^4 \xi^3}{2}$. Then the last two terms with potentials in \eqref{e1} are then absorbed by the left-hand side. This achieves the proof.
\end{proof}

\section{Proof of stability estimate}\label{sec4}
Now we are ready to establish the Lipschitz stability estimate for our coefficients inverse problem.
  
\begin{proof}[Proof of Theorem \ref{thm1}]
Let us set $$U:=(u,u_\Gamma)=\partial_t (Y-\widetilde{Y}) \quad \text{ and } \quad  V:=(v,v_\Gamma)=\partial_t (Z-\widetilde{Z}),$$
where $(Y,Z)$ is the solution of \eqref{eq1to6} and $(\widetilde{Y},\widetilde{Z})$ is the solution corresponding to $(\widetilde{p}_{13}, \widetilde{p}_{21}, \widetilde{q}_{13}, \widetilde{q}_{21}, \widetilde{y}_0, \widetilde{y}_{0,\Gamma}, \widetilde{z}_0, \widetilde{z}_{0,\Gamma})$. Subtracting the system satisfied by $(\widetilde{Y},\widetilde{Z})$ from the system satisfied by $(Y,Z)$ and taking the time derivative in the resulting system, we obtain
\begin{small}
\begin{empheq}[left =\empheqlbrace]{alignat=2}
\begin{aligned}
&\partial_t u = \dv(A_1\nabla u) + p_{11}u + p_{12}v + a_1 \partial_t f(\widetilde{y},\widetilde{z}) + p_{13} \partial_t F(y,z,\widetilde{y},\widetilde{z}) &\text{in } \Omega_T , \\
&\partial_t v = \dv(A_2\nabla v) + p_{21}u + p_{22}v + a_2 \partial_t \widetilde{y} &\text{in } \Omega_T , \\
& \partial_t u_{\Gamma}= \dv_{\Gamma}(D_1\nabla_\Gamma u_{\Gamma}) -\partial_{\nu}^{A_1} u + q_{11}u_{\Gamma} + q_{12}v_{\Gamma} +\ell_1 \partial_t g(\widetilde{y}_\Gamma, \widetilde{z}_\Gamma)\\
& \hspace{1cm} + q_{13} \partial_t G(y_{\Gamma},z_{\Gamma}, \widetilde{y}_\Gamma, \widetilde{z}_\Gamma) &\text{on } \Gamma_T, \\
&\partial_t v_{\Gamma}= \dv_{\Gamma} (D_2\nabla_\Gamma v_{\Gamma}) -\partial_{\nu}^{A_2} v + q_{21} u_{\Gamma} + q_{22}v_{\Gamma} + \ell_2 \partial_t \widetilde{y}_\Gamma &\text{on } \Gamma_T, \\
&u_{\Gamma}(t,x) = u_{|\Gamma}(t,x), \qquad  v_{\Gamma}(t,x) = v_{|\Gamma}(t,x)  &\text{on } \Gamma_T, \label{2eq1to6}
\end{aligned}
\end{empheq}
\end{small}
where $a_1 := p_{13}-\widetilde{p}_{13},\; a_2 :=p_{21}-\widetilde{p}_{21},\; \ell_1 := q_{13}-\widetilde{q}_{13},\; \ell_2 := q_{21}-\widetilde{q}_{21}$ and $F(y,z,\widetilde{y},\widetilde{z}):=f(y,z)-f(\widetilde{y},\widetilde{z}),\; G(y_{\Gamma},z_{\Gamma}, \widetilde{y}_\Gamma, \widetilde{z}_\Gamma):= g(y_{\Gamma},z_{\Gamma})-g(\widetilde{y}_\Gamma,\widetilde{z}_\Gamma)$. The Carleman estimate \eqref{shcar} applied to \eqref{2eq1to6} yields
\begin{align}
\lambda^{-4+\epsilon}[I_\Omega(-3, u)+ I_\Gamma(-3,u_\Gamma,u)] + I_\Omega(0, v) + I_\Gamma(0, v_\Gamma,v) \nonumber\\
& \hspace{-8.5cm}\leq C \left[ s^{4} \lambda^{4+\epsilon} \int_{\omega_{t_0, t_1}} \mathrm{e}^{-2 s \alpha} \xi^4 |v|^2 \,\d x\,\d t \right. \nonumber\\
& \hspace{-7.8 cm} + s^{-3} \lambda^{-4+\epsilon} \left(\int_{\Omega_{t_0, t_1}} \mathrm{e}^{-2 s \alpha} \xi^{-3} (|a_1 \partial_t f(\widetilde{y},\widetilde{z})|^2 + |\partial_t F|^2) \,\d x\,\d t \right.\nonumber \\
& \hspace{-5.8 cm} + \left. \int_{\Gamma_{t_0, t_1}} \mathrm{e}^{-2 s \alpha} \xi^{-3} (|\ell_1 \partial_t g(\widetilde{y}_\Gamma, \widetilde{z}_\Gamma)|^2 + |\partial_t G|^2) \,\d S\,\d t \right) \nonumber\\
& \hspace{-7.5 cm} + \left. \lambda^{2 \epsilon} \left(\int_{\Omega_{t_0, t_1}} \mathrm{e}^{-2 s \alpha} |a_2 \partial_t \widetilde{y}|^2 \,\d x\,\d t + \int_{\Gamma_{t_0, t_1}} \mathrm{e}^{-2 s \alpha} |\ell_2 \partial_t\widetilde{y}_\Gamma|^2 \,\d S\,\d t \right) \right]. \label{e2}
\end{align}
Next we control the term
$$\mathbf{A}:=s^{-3} \lambda^{-4+\epsilon} \left(\int_{\Omega_{t_0, t_1}} \mathrm{e}^{-2 s \alpha} \xi^{-3} |\partial_t F|^2 \,\d x\,\d t + \int_{\Gamma_{t_0, t_1}} \mathrm{e}^{-2 s \alpha} \xi^{-3} |\partial_t G|^2 \,\d S\,\d t \right).$$
Using Assumption II-(i), the estimate $\xi^{-3} \le C \xi^3$ and the boundedness of $\xi^{-1}$, we obtain
$$\mathbf{A} \le C s^{-3} \lambda^{-4+\epsilon} \left(\int_{\Omega_{t_0, t_1}} \mathrm{e}^{-2 s \alpha} (|u|^2+\xi^{3} |v|^2) \,\d x\,\d t + \int_{\Gamma_{t_0, t_1}} \mathrm{e}^{-2 s \alpha} (|u_\Gamma|^2+\xi^{3} |v_\Gamma|^2) \,\d S\,\d t \right).$$
Increasing $s$, one can absorb $\mathbf{A}$ by the left hand side of \eqref{e2}. The latter becomes
\begin{align}
&\lambda^{-4+\epsilon}[I_\Omega(-3, u)+ I_\Gamma(-3,u_\Gamma,u)] + I_\Omega(0, v) + I_\Gamma(0, v_\Gamma,v) \nonumber\\
& \leq C \left[ s^{4} \lambda^{4+\epsilon} \int_{\omega_{t_0, t_1}} \mathrm{e}^{-2 s \alpha} \xi^4 |v|^2 \,\d x\,\d t \right. + s^{-3} \lambda^{-4+\epsilon} \left(\int_{\Omega_{t_0, t_1}} \mathrm{e}^{-2 s \alpha} \xi^{-3} |a_1 \partial_t f(\widetilde{y},\widetilde{z})|^2 \,\d x\,\d t \right.\nonumber \\
& \hspace{3cm} + \left. \int_{\Gamma_{t_0, t_1}} \mathrm{e}^{-2 s \alpha} \xi^{-3} |\ell_1 \partial_t g(\widetilde{y}_\Gamma, \widetilde{z}_\Gamma)|^2 \,\d S\,\d t \right) \nonumber\\
& \hspace{1cm} + \left. \lambda^{2 \epsilon} \left(\int_{\Omega_{t_0, t_1}} \mathrm{e}^{-2 s \alpha} |a_2 \partial_t \widetilde{y}|^2 \,\d x\,\d t + \int_{\Gamma_{t_0, t_1}} \mathrm{e}^{-2 s \alpha} |\ell_2 \partial_t\widetilde{y}_\Gamma|^2 \,\d S\,\d t \right) \right]. \label{e3}
\end{align}
Now, set $\Psi :=(\psi,\psi_\Gamma)= \mathrm{e}^{-s\alpha} \xi^{-3/2} U$ and
$$\mathbf{I}_1 := \lambda^{-4+\epsilon} \int_{\Omega_{t_0,\theta}} M_1^{(-3)} \psi\, \psi \,\d x\,\d t, \qquad \mathbf{J}_1 := \lambda^{-4+\epsilon} \int_{\Gamma_{t_0,\theta}} N_1^{(-3)} \psi_\Gamma\, \psi_\Gamma \,\d S\,\d t.$$
Applying Cauchy-Schwarz inequality and then Young inequality, we obtain
\begin{align}
|\mathbf{I}_1| &\leq \lambda^{-2}\left(\lambda^{-4+\epsilon}\int_{\Omega_{t_0, \theta}} \left(M_1^{(-3)} \psi\right)^2 \,\d x\,\d t \right )^{1/2} \left(\lambda ^{\epsilon} \int_{\Omega_{t_0,\theta}} \mathrm{e}^{-2s\alpha} \xi^{-3} u^2 \,\d x\,\d t \right)^{1/2} \nonumber\\
&\leq \frac{1}{2} \lambda^{-2} \left(\lambda^{-4+\epsilon} \| M_1^{(-3)} \psi\|^2_{L^2(\Omega_{t_0, t_1})}+  \lambda^\epsilon \int_{\Omega_{t_0, t_1}} \mathrm{e}^{-2s\alpha} \xi^{-3} u^2 \,\d x\,\d t \right)\label{el2s1}
\end{align}
for $\lambda \ge 1$. Using again Cauchy-Schwarz and Young inequalities, we infer that
\begin{align}
|\mathbf{J}_1| &\leq \lambda^{-2}\left(\lambda^{-4+\epsilon}\int_{\Gamma_{t_0, \theta}} \left(N_1^{(-3)} \psi_\Gamma\right)^2 \,\d S\,\d t \right )^{1/2} \left(\lambda ^{\epsilon} \int_{\Gamma_{t_0,\theta}} \mathrm{e}^{-2s\alpha} \xi^{-3} u_\Gamma^2 \,\d S\,\d t \right)^{1/2} \nonumber\\
& \le \frac{1}{2} \lambda^{-2} \left(\lambda^{-4+\epsilon} \| N_1^{(-3)} \psi_\Gamma\|^2_{L^2(\Gamma_{t_0, t_1})}+ \lambda^\epsilon \int_{\Gamma_{t_0, t_1}} \mathrm{e}^{-2s\alpha} \xi^{-3} u_\Gamma^2 \,\d S\,\d t \right).\label{el4s1}
\end{align}
By adding \eqref{el2s1} to \eqref{el4s1} and using the shifted Carleman estimate \eqref{e3}, we find
\begin{align}
|\mathbf{I}_1|+|\mathbf{J}_1| \le \frac{1}{2} \lambda^{-2} C \left[s^{4} \lambda^{4+\epsilon} \int_{\omega_{t_0, t_1}} \mathrm{e}^{-2 s \alpha} \xi^4 |v|^2 \,\d x\,\d t \right. \nonumber\\
& \hspace{-9cm}+  s^{-3} \lambda^{-4+\epsilon} \left(\int_{\Omega_{t_0, t_1}} \mathrm{e}^{-2 s \alpha} \xi^{-3} |a_1 \partial_t f(\widetilde{y},\widetilde{z})|^2 \,\d x\,\d t + \int_{\Gamma_{t_0, t_1}} \mathrm{e}^{-2 s \alpha} \xi^{-3} |\ell_1 \partial_t g(\widetilde{y}_\Gamma, \widetilde{z}_\Gamma)|^2 \,\d S\,\d t \right) \nonumber \\
& \hspace{-6cm} \left. + \lambda^{2 \epsilon} \left(\int_{\Omega_{t_0, t_1}} \mathrm{e}^{-2 s \alpha} |a_2 \partial_t \widetilde{y}|^2 \,\d x\,\d t + \int_{\Gamma_{t_0, t_1}} \mathrm{e}^{-2 s \alpha} |\ell_2 \partial_t\widetilde{y}_\Gamma|^2 \,\d S\,\d t \right) \right].
\end{align}
Also, by computing $\mathbf{I}_1$ using integration by parts over $\Omega$, we obtain
\begin{align*}
\mathbf{I}_1 &=\lambda^{-4+\epsilon} \left[\frac{1}{2} \int_{\Omega_{t_0, \theta}} \partial_t (\psi^2) \,\d x\,\d t -5 s\lambda^2 \int_{\Omega_{t_0, \theta}} \xi \sigma \psi^2 \,\d x\,\d t \right.\\
&\hspace{2cm} - \lambda \int_{\Omega_{t_0, \theta}} \left(s\xi -\frac{3}{2}\right) \dv(A_1(x)\nabla \eta^0) \psi^2 \,\d x\,\d t \\
&\hspace{2cm}  + \left. \lambda \int_{\Gamma_{t_0, \theta}} \left(s\xi -\frac{3}{2}\right) \partial_\nu^{A_1} \eta^0 \psi_\Gamma^2 \,\d S\,\d t \right].
\end{align*}
Since $\psi(t_0, x)=\lim\limits_{t \to t_0} \mathrm{e}^{-s\alpha(t,x)} \xi^{-3/2}(t,x) u(t,x)=0$, we obtain
\begin{align*}
\lambda^{-4+\epsilon} \int_\Omega |\psi(\theta,\cdot)|^2 \,\d x &= 2\mathbf{I}_1 + 2\lambda^{-4+\epsilon} \left[5 s\lambda^2 \int_{\Omega_{t_0, \theta}} \xi \sigma \psi^2 \,\d x\,\d t \right.\\
& \hspace{-3cm}+ \lambda \int_{\Omega_{t_0, \theta}} \left(s\xi -\frac{3}{2}\right) \dv(A_1(x)\nabla \eta^0) \psi^2 \,\d x\,\d t \left. - \lambda \int_{\Gamma_{t_0, \theta}} \left(s\xi -\frac{3}{2}\right) \partial_\nu^{A_1} \eta^0 \psi_\Gamma^2 \,\d S\,\d t \right].
\end{align*}
On the other hand,
\begin{align*}
\mathbf{J}_1 &=\lambda^{-4+\epsilon} \left[\frac{1}{2} \int_{\Gamma_{t_0, \theta}} \partial_t (\psi_\Gamma^2) \,\d S\,\d t - \lambda \int_{\Gamma_{t_0, \theta}} \left(s\xi -\frac{3}{2}\right) \partial_\nu^{A_1} \eta^0 \psi_\Gamma^2 \,\d S\,\d t \right].
\end{align*}
Using $\psi_\Gamma(t_0, x)=\lim\limits_{t \to t_0} \mathrm{e}^{-s\alpha(t,x)} \xi^{-3/2}(t,x) u_\Gamma(t,x)=0$, we get
$$\lambda^{-4+\epsilon} \int_\Gamma |\psi_\Gamma(\theta,\cdot)|^2 \,\d S = 2\mathbf{J}_1 + 2\lambda^{-3+\epsilon} \int_{\Gamma_{t_0, \theta}} \left(s\xi -\frac{3}{2}\right) (\partial_\nu^{A_1} \eta^0) \psi_\Gamma^2 \,\d S\,\d t.$$
By making use of the estimate $\xi^{-1}\le C$, we get
\begin{align}
\lambda^{-4+\epsilon} \left(\int_\Omega |\psi(\theta,\cdot)|^2 \,\d x + \int_\Gamma |\psi_\Gamma(\theta,\cdot)|^2 \,\d S \right) \nonumber\\
& \hspace{-6.5cm}\le 2 (|\mathbf{I}_1|+|\mathbf{J}_1|) + C \lambda^{-3+\epsilon} s (\lambda +1) \int_{\Omega_{t_0,\theta}} \mathrm{e}^{-2s\alpha} \xi^{-2} |u|^2 \,\d x\, \d t \nonumber\\
& \hspace{-6.2cm} + C \lambda^{-3+\epsilon} s \int_{\Gamma_{t_0,\theta}} \mathrm{e}^{-2s\alpha} \xi^{-2} |u_\Gamma|^2 \,\d S\, \d t \nonumber\\
& \hspace{-6.5cm} \leq C s\lambda^{-2}\left[ s^{4} \lambda^{4+\epsilon} \int_{\omega_{t_0, t_1}} \mathrm{e}^{-2 s \alpha} \xi^4 |v|^2 \,\d x\,\d t\nonumber \right.\\
& \hspace{-6.3cm} + s^{-3} \lambda^{-4+\epsilon} \left(\int_{\Omega_{t_0, t_1}} \mathrm{e}^{-2 s \alpha} \xi^{-3} |a_1 \partial_t f(\widetilde{y},\widetilde{z})|^2 \,\d x\,\d t \right.\nonumber \\
& \hspace{-6.3cm} + \left. \int_{\Gamma_{t_0, t_1}} \mathrm{e}^{-2 s \alpha} \xi^{-3} |\ell_1 \partial_t g(\widetilde{y}_\Gamma, \widetilde{z}_\Gamma)|^2 \,\d S\,\d t \right) \nonumber\\
& \hspace{-6.3cm} + \left. \lambda^{2 \epsilon} \left(\int_{\Omega_{t_0, t_1}} \mathrm{e}^{-2 s \alpha} |a_2 \partial_t \widetilde{y}|^2 \,\d x\,\d t + \int_{\Gamma_{t_0, t_1}} \mathrm{e}^{-2 s \alpha} |\ell_2 \partial_t\widetilde{y}_\Gamma|^2 \,\d S\,\d t \right) \right]. \label{e4}
\end{align}
Therefore,
\begin{align}
\lambda^{-4+\epsilon} \left(\int_\Omega \mathrm{e}^{-2 s \alpha(\theta,\cdot)} \xi^{-3}(\theta,\cdot)|u(\theta,\cdot)|^2 \,\d x + \int_\Gamma \mathrm{e}^{-2 s \alpha(\theta,\cdot)} \xi^{-3}(\theta,\cdot) |u_\Gamma(\theta,\cdot)|^2 \,\d S \right) \nonumber\\
& \hspace{-12cm} \leq C \left[ s^{5} \lambda^{2+\epsilon} \int_{\omega_{t_0, t_1}} \mathrm{e}^{-2 s \alpha} \xi^4 |v|^2 \,\d x\,\d t \right. \nonumber\\
& \hspace{-11cm} + s^{-2} \lambda^{-6+\epsilon} \left(\int_{\Omega_{t_0, t_1}} \mathrm{e}^{-2 s \alpha} \xi^{-3} |a_1 \partial_t f(\widetilde{y},\widetilde{z})|^2 \,\d x\,\d t \right.\nonumber \\
& \hspace{-11cm} + \left. \int_{\Gamma_{t_0, t_1}} \mathrm{e}^{-2 s \alpha} \xi^{-3} |\ell_1 \partial_t g(\widetilde{y}_\Gamma, \widetilde{z}_\Gamma)|^2 \,\d S\,\d t \right) \nonumber\\
& \hspace{-11cm} + \left. s\lambda^{-2+2 \epsilon} \left(\int_{\Omega_{t_0, t_1}} \mathrm{e}^{-2 s \alpha} |a_2 \partial_t \widetilde{y}|^2 \,\d x\,\d t + \int_{\Gamma_{t_0, t_1}} \mathrm{e}^{-2 s \alpha} |\ell_2 \partial_t\widetilde{y}_\Gamma|^2 \,\d S\,\d t \right) \right]. \label{e5}
\end{align}
Since $(y,z)(\theta,\cdot)=(\widetilde{y},\widetilde{z})(\theta,\cdot)$ and by passing to the trace $(y_\Gamma, z_\Gamma)(\theta, \cdot) = (\widetilde{y}_\Gamma, \widetilde{z}_\Gamma)(\theta, \cdot)$, then $u(\theta,\cdot)=a_1 f(\widetilde{y},\widetilde{z})(\theta,\cdot)$ and $u_\Gamma(\theta,\cdot)=\ell_1 g(\widetilde{y}_\Gamma,\widetilde{z}_\Gamma)(\theta,\cdot)$. Hence,
\begin{align}
\lambda^{-4+\epsilon} \left(\int_\Omega \mathrm{e}^{-2 s \alpha(\theta,\cdot)} \xi^{-3}(\theta,\cdot)|a_1 f(\widetilde{y},\widetilde{z})(\theta,\cdot)|^2 \,\d x + \int_\Gamma \mathrm{e}^{-2 s \alpha(\theta,\cdot)} \xi^{-3}(\theta,\cdot) |\ell_1 g(\widetilde{y}_\Gamma,\widetilde{z}_\Gamma)(\theta,\cdot)|^2 \,\d S \right) \nonumber\\
& \hspace{-15.5cm} \leq C \left[ s^{5} \lambda^{2+\epsilon} \int_{\omega_{t_0, t_1}} \mathrm{e}^{-2 s \alpha} \xi^4 |v|^2 \,\d x\,\d t \right. + s^{-2} \lambda^{-6+\epsilon} \left(\int_{\Omega_{t_0, t_1}} \mathrm{e}^{-2 s \alpha} \xi^{-3} |a_1 \partial_t f(\widetilde{y},\widetilde{z})|^2 \,\d x\,\d t \right.\nonumber \\
& \hspace{-14.5cm} + \left. \int_{\Gamma_{t_0, t_1}} \mathrm{e}^{-2 s \alpha} \xi^{-3} |\ell_1 \partial_t g(\widetilde{y}_\Gamma, \widetilde{z}_\Gamma)|^2 \,\d S\,\d t \right) \nonumber\\
& \hspace{-14.5cm} + \left. s\lambda^{-2+2 \epsilon} \left(\int_{\Omega_{t_0, t_1}} \mathrm{e}^{-2 s \alpha} |a_2 \partial_t \widetilde{y}|^2 \,\d x\,\d t + \int_{\Gamma_{t_0, t_1}} \mathrm{e}^{-2 s \alpha} |\ell_2 \partial_t\widetilde{y}_\Gamma|^2 \,\d S\,\d t \right) \right]. \label{e6}
\end{align}
By setting
$$\Phi :=(\phi, \phi_\Gamma) =\mathrm{e}^{-s\alpha} V, \quad \mathbf{I}_2 := \int_{\Omega_{t_0,\theta}} M_1^{(0)} \phi\, \phi \,\d x\,\d t, \quad \mathbf{J}_2 := \int_{\Gamma_{t_0,\theta}} N_1^{(0)} \phi_\Gamma\, \phi_\Gamma \,\d S\,\d t,$$
and using $v(\theta,\cdot)=a_2 \widetilde{y}(\theta,\cdot), v_\Gamma(\theta,\cdot)=\ell_2 \widetilde{y}_\Gamma(\theta,\cdot)$, we deduce that
\begin{align}
\int_\Omega \mathrm{e}^{-2 s \alpha(\theta,\cdot)} |a_2 \widetilde{y}(\theta,\cdot)|^2 \,\d x + \int_\Gamma \mathrm{e}^{-2 s \alpha(\theta,\cdot)} |\ell_2 \widetilde{y}_\Gamma(\theta,\cdot)|^2 \,\d S\nonumber\\
& \hspace{-8.5cm} \leq C \left[ s^{5/2} \lambda^{2+\epsilon} \int_{\omega_{t_0, t_1}} \mathrm{e}^{-2 s \alpha} \xi^4 |v|^2 \,\d x\,\d t \right. \nonumber\\
& \hspace{-8cm} + s^{-9/2} \lambda^{-6+\epsilon} \left(\int_{\Omega_{t_0, t_1}} \mathrm{e}^{-2 s \alpha} \xi^{-3} |a_1 \partial_t f(\widetilde{y},\widetilde{z})|^2 \,\d x\,\d t \right.\nonumber \\
& \hspace{-8cm} + \left. \int_{\Gamma_{t_0, t_1}} \mathrm{e}^{-2 s \alpha} \xi^{-3} |\ell_1 \partial_t g(\widetilde{y}_\Gamma, \widetilde{z}_\Gamma)|^2 \,\d S\,\d t \right) \nonumber\\
& \hspace{-8.5cm} + \left. s^{-3/2}\lambda^{-2+2 \epsilon} \left(\int_{\Omega_{t_0, t_1}} \mathrm{e}^{-2 s \alpha} |a_2 \partial_t \widetilde{y}|^2 \,\d x\,\d t + \int_{\Gamma_{t_0, t_1}} \mathrm{e}^{-2 s \alpha} |\ell_2 \partial_t\widetilde{y}_\Gamma|^2 \,\d S\,\d t \right) \right]. \label{e7}
\end{align}
Adding up \eqref{e6}-\eqref{e7} one can absorb the remaining terms in the right-hand side by choosing $0<\epsilon<1$ and a large parameter $\lambda$. Indeed, since the $C_0$-semigroup $\left(\mathrm{e}^{t\mathcal{A}}\right)_{t\ge 0}$ is analytic, the maximal regularity result in \cite[Corollary 2.13]{An'90} implies that $\widetilde{Y}:=(\widetilde{y}, \widetilde{y}_\Gamma) \in H^1\left(t_0, t_1; \mathbb{H}^2\right)$. Since $N\le 3$, the Sobolev embeddings $H^2(\Omega) \hookrightarrow L^\infty(\Omega)$ and $H^2(\Gamma) \hookrightarrow L^\infty(\Gamma)$ yield that
$\partial_t \widetilde{y} \in L^2\left(t_0, t_1, L^\infty(\Omega)\right)$ and $\partial_t \widetilde{y}_\Gamma \in L^2\left(t_0, t_1, L^\infty(\Gamma)\right)$. Therefore, the positivity \eqref{pineq} and Assumption II-(iii)-(ii) imply that 
$$
\begin{aligned}
& \text{for a.e } (t, x) \in \Omega_{t_0, t_1}:\quad &&|\partial_t \widetilde{y}(t,x)| \leq \frac{\|\partial_t \widetilde{y}(t,\cdot)\|_{L^\infty(\Omega)}}{r}| \widetilde{y}(\theta, x)|, \\
& \quad &&|\partial_t f(\widetilde{y}, \widetilde{z})(t,x)| \leq \frac{\|\partial_t f(\widetilde{y}, \widetilde{z})(t,\cdot)\|_{L^\infty(\Omega)}}{r_1}| f(\widetilde{y}, \widetilde{z})(\theta, x)|, \\
& \text{for a.e } (t, x) \in \Gamma_{t_0, t_1}:\quad &&|\partial_t \widetilde{y}_\Gamma(t,x)| \leq \frac{\|\partial_t \widetilde{y}_\Gamma(t,\cdot)\|_{L^\infty(\Gamma)}}{r}| \widetilde{y}_\Gamma(\theta, x)|\\
& \quad &&|\partial_t g(\widetilde{y}_\Gamma, \widetilde{z}_\Gamma)(t,x)| \leq \frac{\|\partial_t g(\widetilde{y}_\Gamma, \widetilde{z}_\Gamma)(t,\cdot)\|_{L^\infty(\Gamma)}}{r_1}| g(\widetilde{y}_\Gamma, \widetilde{z}_\Gamma)(\theta, x)|.
\end{aligned}
$$
Since $\alpha(t,\cdot) \ge \alpha(\theta,\cdot)$ for all $t\in (t_0,t_1)$, the above estimates allow us to absorb the last four integrals in \eqref{e6}-\eqref{e7} by the left-hand sides. Therefore, by \eqref{pineq} and Assumption II-(ii), we obtain
\begin{align*}
&\lambda^{-4+\epsilon} \left(\int_\Omega \mathrm{e}^{-2 s \alpha(\theta,\cdot)} \xi^{-3}(\theta,\cdot) |a_1|^2 \,\d x + \int_\Gamma \mathrm{e}^{-2 s \alpha(\theta,\cdot)} \xi^{-3}(\theta,\cdot) |\ell_1|^2 \,\d S \right)\\
& \hspace{1cm} +\int_\Omega \mathrm{e}^{-2 s \alpha(\theta,\cdot)} |a_2|^2 \,\d x + \int_\Gamma \mathrm{e}^{-2 s \alpha(\theta,\cdot)} |\ell_2|^2 \,\d S\nonumber\\
& \leq C s^{5} \lambda^{2+\epsilon} \int_{\omega_{t_0, t_1}} \mathrm{e}^{-2 s \alpha} \xi^4 |v|^2 \,\d x\,\d t.
\end{align*}
The functions $\mathrm{e}^{-2 s \alpha(\theta,\cdot)}$ and $\xi^{-3}(\theta,\cdot)$ are bounded from below by positive constants on $\overline{\Omega}$ and $e^{-2 s \alpha} \xi^{4} \text { is bounded on } \Omega_{t_0,t_1}$. Moreover, we can fix $s$ and $\lambda$ sufficiently large. Thus, the proof is completed.
\end{proof}

\begin{remark}
We close the paper with the following remarks:
\begin{itemize}
\item The method we adopted above does not apply for systems like system \eqref{peq1to6} that contain a semilinearity in each equation.
\item For similar inverse problems with boundary measurements on a part $\gamma \Subset \Gamma$ instead of $\omega \Subset \Omega$, the boundary Carleman estimate with one observation for coupled parabolic systems is an open problem even for Dirichlet boundary conditions.
\item It would be of much interest to prove similar stability results for strongly coupled (i.e., coupled through the principal parts) parabolic systems with dynamic boundary conditions in the spirit of \cite{WY'17}.
\end{itemize}
\end{remark}

\appendix
\section{Proof of Lemma \ref{lm2}} \label{app}
Throughout the proof, we combine some ideas from the proofs of \cite[Lemma 2.4]{ACMO'20} and \cite[Theorem 7.1]{Fu'00}, so we shorten similar calculations.

First, let us recall some properties of the weights $\alpha$ and $\xi$ from Section \ref{sec3}:
\begin{enumerate}[label={(\alph*)}]
\item \label{a} $|\partial_t \alpha| \leq CT\xi^2$ and $|\partial_t \xi| \le C T \xi^2$.
\item \label{b} $\xi \geq \dfrac{4}{(t_1 -t_0)^2}$ and $\xi \leq \dfrac{(t_1 -t_0)^4}{16} \xi^3$.
\item \label{c} $\left| \left(\dfrac{\tau}{2}-s\alpha\right) \partial_t \log \gamma(t)\right| \le C_{T,\tau} s \xi^2$ for every $\tau \in \mathbb{R}$.
\item \label{d} $\left|\partial_t \left(\left(\dfrac{\tau}{2}-s\alpha\right) \partial_t \log \gamma(t)\right)\right| \le C_{T,\tau} s \xi^3$ for every $\tau \in \mathbb{R}$.
\end{enumerate}
Furthermore, we set
\begin{align*}
f &:=e^{-s\alpha} L z, \qquad g:= e^{-s\alpha} L_{\Gamma}(z_\Gamma, z),\notag\\
\tilde{f} &:=f-\lambda \left(s\xi +\frac{\tau}{2}\right) \dv\left(A(x)\nabla \eta^0\right) \psi+\left[\lambda^2 \frac{\tau^2}{4} -s \lambda^2 \xi (1-\tau)\right] \sigma \psi,
\end{align*}
so that
\begin{align}
M_1^{(\tau)}\psi + M_2^{(\tau)} \psi=\tilde{f}, \label{e7-28}\\
N_1^{(\tau)}\psi + N_2^{(\tau)} \psi=g \label{e'7-28}.  
\end{align}
Recall that there exists a positive constant $C_1>0$ such that
\begin{equation}\label{sigmaineq}
\beta |\nabla \eta^0|^2 \leq \sigma(x) \leq  C_1, \quad x\in \overline{\Omega},
\end{equation}
and that we have
\begin{align}
\nabla\alpha &= -\nabla \xi=-\lambda \xi \nabla \eta^0, \label{2eq3.3}\\
\dv(A(x)\nabla \alpha) &= -\lambda^2 \xi \sigma -\lambda \xi \dv(A(x)\nabla\eta^0), \nonumber\\
\partial_t \psi &= \mathrm{e}^{-s\alpha} \partial_t z-s\psi \partial_t \alpha, \nonumber\\
\nabla \psi &= \mathrm{e}^{-s\alpha} \nabla z +s\lambda \psi \xi \nabla \eta^0 , \nonumber\\
\dv(A(x)\nabla \psi) &= \mathrm{e}^{-s\alpha} \dv(A(x)\nabla z) + 2s \lambda \xi  A(x)\nabla \eta^0\cdot \nabla\psi - s^2\lambda^2 \xi^2 \psi \sigma \nonumber\\
& \qquad + s\lambda \xi \psi \dv(A(x)\nabla\eta^0)+ s\lambda^2 \xi \psi \sigma .\nonumber
\end{align}
We will often use the following estimates on $\overline{\Omega}$,
\begin{equation}\label{2eq3.9}
|\nabla \alpha| \leq C \lambda \xi, \qquad |\partial_t \alpha| \leq C \xi^2, \qquad |\partial_t \xi| \leq C \xi^2.
\end{equation}
By integration by parts and using \eqref{2eq3.3}, we obtain
\begin{align*}
& \langle M_{1,1}\psi, M_{2,1}\psi\rangle_{L^2(\Omega_T)} =-s^2\lambda^3 \int_{\Omega_T} \left(s\xi +\frac{\tau}{2}\right)\xi^2 \sigma  A(x)\nabla \eta^0  \cdot \nabla (\psi^2) \,\d x\,\d t.\\
& = \int_{\Omega_T} (3 s^3 \lambda^4 \xi^3 + \tau s^2\lambda^4 \xi^2) \sigma^2 \psi^2 \,\d x\,\d t + s^2 \lambda^3 \int_{\Omega_T}  \left(s\xi +\frac{\tau}{2}\right) \xi^2 (\nabla \sigma \cdot A(x) \nabla \eta^0) \psi^2 \,\d x\,\d t\\
& + s^2 \lambda^3 \int_{\Omega_T}  \left(s\xi +\frac{\tau}{2}\right)\xi^2  \sigma  \dv(A(x)\nabla \eta^0) \psi^2 \,\d x\,\d t -s^2\lambda^3 \int_{\Gamma_T} \left(s\xi +\frac{\tau}{2}\right)\xi^2 \sigma \partial_\nu^A \eta^0 \psi^2 \,\d S\,\d t.
\end{align*}
After integrating by parts in time and using \eqref{sigmaineq} with \eqref{2eq3.9}, we obtain
\begin{align*}
\langle M_{1,2}\psi, M_{2,1}\psi\rangle_{L^2(\Omega_T)} &=-\frac{1}{2} s^2 \lambda^2 \int_{\Omega_T} \sigma \xi^2 \partial_t (\psi^2) \,\d x\,\d t = s^2 \lambda^2 \int_{\Omega_T} \sigma \partial_t \xi \xi \psi^2 \,\d x\,\d t
\end{align*}
since $\psi$ vanishes at $t=0$ and $t=T$.

Using integration by parts and $\partial_i (a_{ij} \partial_j \psi)=a_{ij} \partial_i \partial_j \psi + \partial_i (a_{ij}) \partial_j \psi$, with help of \eqref{2eq3.3} the next addend becomes
\begin{align*}
& \langle M_{1,1}\psi, M_{2,2}\psi\rangle_{L^2(\Omega_T)} =-2\lambda \int_{\Omega_T} \left(s\xi +\frac{\tau}{2}\right)(\nabla \eta^0 \cdot A(x)\nabla \psi) \dv(A(x)\nabla \psi) \,\d x\,\d t\\
&= 2s\lambda^2 \bigintsss_{\Omega_T} \xi \left|\nabla \eta^0 \cdot A(x)\nabla \psi\right|^2 \,\d x\,\d t\\
& \qquad +  2\lambda \int_{\Omega_T} \sum_{i,j=1}^N \sum_{k,l=1}^N \left(s\xi +\frac{\tau}{2}\right) \partial_i (a_{ij} a_{kl} \partial_k \eta^0) (\partial_l \psi) (\partial_j \psi) \,\d x\,\d t\\
& \qquad +  2\lambda \int_{\Omega_T} \sum_{i,j=1}^N \sum_{k,l=1}^N \left(s\xi +\frac{\tau}{2}\right)  a_{ij} a_{kl} \partial_k \eta^0 (\partial_i \partial_l \psi) (\partial_j \psi) \,\d x\,\d t \\
& \qquad - 2\lambda \int_{\Gamma_T} \left(s\xi +\frac{\tau}{2}\right) (\partial_\nu \eta^0) (\partial_\nu^A \psi)^2 \,\d S\,\d t\\
& \qquad -2\lambda \int_{\Omega_T} \left(s\xi +\frac{\tau}{2}\right) \sum_{i,j=1}^N \sum_{k,l=1}^N a_{kl} (\partial_k \eta^0) (\partial_l \psi) \partial_i (a_{ij}) \partial_j \psi \,\d x\,\d t\\
& = \mathbf{D}_1+\mathbf{D}_2+\mathbf{D}_3+\mathbf{D}_4 +\mathbf{D}_5.
\end{align*}
By integration by parts, we obtain
\begin{align*}
\mathbf{D}_3 & = \lambda \int_{\Omega_T} \sum_{i,j=1}^N \sum_{k,l=1}^N \left(s\xi +\frac{\tau}{2}\right)  a_{ij} a_{kl} (\partial_k \eta^0) \partial_l [(\partial_i \psi)(\partial_j \psi)] \,\d x\,\d t\\
&= - s\lambda^2 \int_{\Omega_T} \sigma \xi A(x)\nabla \psi\cdot \nabla \psi \,\d x\,\d t\\
& \quad - \lambda \int_{\Omega_T} \left(s\xi +\frac{\tau}{2}\right) \sum_{i,j=1}^N \sum_{k,l=1}^N  \partial_l (a_{ij} a_{kl}\partial_k \eta^0) (\partial_i \psi)(\partial_j \psi) \,\d x\,\d t\\
& \quad + \lambda \int_{\Gamma_T} \left(s\xi +\frac{\tau}{2}\right) (\partial_\nu \eta^0) (A(x)\nu\cdot \nu)(A(x)\nabla \psi \cdot \nabla \psi) \,\d S\,\d t,
\end{align*}
Integrating by parts, we obtain
\begin{align}\label{eqtg}
\langle M_{1,2}\psi, M_{2,2}\psi\rangle_{L^2(\Omega_T)} & = -\int_{\Gamma_T} \partial_t \psi \partial_\nu^A \psi \,\d S\,\d t.
\end{align}
Using integration by parts, \eqref{2eq3.3} and \eqref{2eq3.9} imply
\begin{align}
&\langle M_{1,1}\psi, M_{2,3}\psi\rangle_{L^2(\Omega_T)} = s\lambda \int_{\Omega_T} \left(s\xi +\frac{\tau}{2}\right) \left(\frac{\tau}{2}-s\alpha\right)(\partial_t \log \gamma) A(x)\nabla\eta^0 \cdot \nabla(\psi^2) \,\d x\,\d t \nonumber\\
& \quad = \lambda \int_{\Gamma_T} \left(s\xi +\frac{\tau}{2}\right) \left(\frac{\tau}{2}-s\alpha\right)(\partial_t \log \gamma) \partial_\nu^A \eta^0 \, \psi^2 \,\d S\,\d t \label{st2}\\
& \qquad - \lambda \int_{\Omega_T} \left(s\xi +\frac{\tau}{2}\right) \left(\frac{\tau}{2}-s\alpha\right)(\partial_t \log \gamma) \dv(A\nabla \eta^0)\psi^2 \,\d x\,\d t \nonumber\\
& \qquad - s\lambda^2 \int_{\Omega_T} (\partial_t \log \gamma) \xi(\tau +s(\xi-\alpha))\sigma \psi^2 \,\d x\,\d t \nonumber
\end{align}
Since $\psi(0)=\psi(T)=0$ and $|\partial_t^2 \alpha|\leq C\xi^3$, integration by parts yields
\begin{align*}
\langle M_{1,2}\psi, M_{2,3}\psi\rangle_{L^2(\Omega_T)} &= -\frac{1}{2}\int_{\Omega_T} \partial_t \left(\left(\frac{\tau}{2}-s\alpha\right) (\partial_t \log \gamma)\right)\, \psi^2 \,\d x\,\d t
\end{align*}
For the boundary terms $N_1^{(\tau)}$ and $N_2^{(\tau)}$, we will use the divergence formula \eqref{sdt}. We have
\begin{align*}
\langle N_{1,1}\psi, N_{2,1}\psi\rangle_{L^2(\Gamma_T)}&=0,
\end{align*}
by means of $\psi(0)=\psi(T)=0$. Since $\xi(t,\cdot)$ is constant on $\Gamma$, \eqref{sdt} and \eqref{uellipD} yield
\begin{align*}
&\langle N_{1,2}\psi, N_{2,1}\psi\rangle_{L^2(\Gamma_T)} = \lambda \int_{\Gamma_T} \left(s\xi +\frac{\tau}{2}\right)\partial_\nu^A \eta^0 \psi \dv_\Gamma (D(x)\nabla_\Gamma \psi) \,\d S\,\d t\\
&= -  \lambda \int_{\Gamma_T} \left(s\xi +\frac{\tau}{2}\right) \psi \langle D(x)\nabla_\Gamma \psi, \nabla_\Gamma(\partial_\nu^A \eta^0)\rangle_\Gamma \d S \d t\\
& \quad -  \lambda \int_{\Gamma_T} \partial_\nu^A \eta^0 \left(s\xi +\frac{\tau}{2}\right) \langle D(x)\nabla_\Gamma \psi, \nabla_\Gamma \psi \rangle_\Gamma \d S \d t.
\end{align*}
The next terms are given by by
\begin{align*}
\langle N_{1,1}\psi, N_{2,2}\psi\rangle_{L^2(\Gamma_T)} &=-\frac{1}{2} \int_{\Gamma_T} \partial_t\left( \left(\frac{\tau}{2}-s\alpha\right) (\partial_t \log \gamma)\right)  \psi^2 \,\d S\,\d t
\end{align*}
and by \eqref{2eq3.9} we have
\begin{align*}
\langle N_{1,2}\psi, N_{2,2}\psi\rangle_{L^2(\Gamma_T)} &=- \lambda \int_{\Gamma_T} \left(s\xi +\frac{\tau}{2}\right) \left(\frac{\tau}{2}-s\alpha\right) \partial_\nu^A \eta^0 (\partial_t \log \gamma) \psi^2 \,\d S\,\d t
\end{align*}
This last term cancels with \eqref{st2}, and the term
$$\langle N_{1,1}\psi, N_{2,3}\psi\rangle_{L^2(\Gamma_T)}= \int_{\Gamma_T} \partial_t \psi \partial_\nu^A \psi \,\d S\,\d t$$
simplifies with \eqref{eqtg}. Finally,
$$\langle N_{1,2}\psi, N_{2,3}\psi\rangle_{L^2(\Gamma_T)}=- \lambda\int_{\Gamma_T} \left(s\xi +\frac{\tau}{2}\right) \partial_\nu^A \eta^0 \partial_\nu^A \psi \psi \,\d S\,\d t.$$

Next, we estimate the terms of $\left\langle M_1^{(\tau)} \psi,  M_2^{(\tau)} \psi \right\rangle_{L^2(\Omega_T)}$ that are similar. Using \ref{a} and \ref{d}, we have
\begin{align*}
I_1 &:=\langle M_{1,2}, M_{2,1}\rangle_{L^2(\Omega_T)}+\langle M_{1,2}, M_{2,3}\rangle_{L^2(\Omega_T)} \notag\\
& = s^2 \lambda^2 \int_{\Omega_T} \sigma \partial_t \xi \xi \psi^2 \,\d x\,\d t -\frac{1}{2}\int_{\Omega_T} \partial_t \left(\left(\frac{\tau}{2}-s\alpha\right) (\partial_t \log \gamma)\right)\, \psi^2 \,\d x\,\d t \notag\\
&\ge -C s^2 \lambda^2 \int_{\Omega_T} \xi^3 \psi^2 \,\d x\,\d t.
\end{align*}
Using \ref{c}, we see that
\begin{align*}
& I_2 :=\int_{\Omega_T} (3 s^3 \lambda^4 \xi^3 + \tau s^2\lambda^4 \xi^2) \sigma^2 \psi^2 \,\d x\,\d t + s^2 \lambda^3 \int_{\Omega_T}  \left(s\xi +\frac{\tau}{2}\right) \xi^2 (\nabla \sigma \cdot A(x) \nabla \eta^0) \psi^2 \,\d x\,\d t \notag\\
& \quad + s^2 \lambda^3 \int_{\Omega_T}  \left(s\xi +\frac{\tau}{2}\right)\xi^2  \sigma  \dv(A(x)\nabla \eta^0) \psi^2 \,\d x\,\d t \notag \\
& - \int_{\Omega_T} (\partial_t \log \gamma) \left[\lambda \left(s\xi +\frac{\tau}{2}\right) \left(\frac{\tau}{2}-s\alpha\right) \dv(A\nabla \eta^0) + s\lambda^2 \xi(\tau +s(\xi-\alpha))\sigma \right]\psi^2 \,\d x\,\d t \notag \\
& \quad \ge 3 s^3\lambda^4 \int_{\Omega_T} \sigma^2 \xi^3 \psi^2 \,\d x\,\d t - C s^3\lambda^3 \int_{\Omega_T} \xi^3 \psi^2 \,\d x\,\d t.
\end{align*}
Using \eqref{ceqlm1} of Lemma \ref{lm0}, we can estimate
\begin{align*}
J_2 &:=-s^3\lambda^3\int_{\Gamma_T} \xi^3 \sigma (\partial_\nu^A \eta^0) \psi^2 \,\d S\,\d t + \lambda \int_{\Gamma_T} \left(s\xi +\frac{\tau}{2}\right) \left(\frac{\tau}{2}-s\alpha\right) (\partial_t \log \gamma) (\partial_\nu^A \eta^0) \psi^2 \,\d S\,\d t \notag\\
& \ge Cs^3\lambda^3 \int_{\Gamma_T} \xi^3 \psi^2 \,\d S\,\d t.
\end{align*}
By virtue of \eqref{eqconormal} of Lemma \ref{lm0}, we find
\begin{align*}
I_3 &:= 2s\lambda^2 \bigintsss_{\Omega_T} \xi \left|\nabla \eta^0 \cdot A(x)\nabla \psi\right|^2 \,\d x\,\d t - s\lambda^2 \int_{\Omega_T} \sigma \xi A(x)\nabla \psi\cdot \nabla \psi \,\d x\,\d t \notag\\
& \quad +  2\lambda \int_{\Omega_T} \sum_{i,j=1}^N \sum_{k,l=1}^N \left(s\xi +\frac{\tau}{2}\right) \partial_i (a_{ij} a_{kl} \partial_k \eta^0) (\partial_l \psi) (\partial_j \psi) \,\d x\,\d t \notag \\
& \quad - \lambda \int_{\Omega_T} \left(s\xi +\frac{\tau}{2}\right) \sum_{i,j=1}^N \sum_{k,l=1}^N  \partial_l (a_{ij} a_{kl}\partial_k \eta^0) (\partial_i \psi)(\partial_j \psi) \,\d x\,\d t \notag\\
&\quad -2\lambda \int_{\Omega_T} \left(s\xi +\frac{\tau}{2}\right) \sum_{i,j=1}^N \sum_{k,l=1}^N a_{kl} (\partial_k \eta^0) (\partial_l \psi) \partial_i (a_{ij}) \partial_j \psi \,\d x\,\d t \notag\\
& \quad + \lambda \int_{\Gamma_T} \left(s\xi +\frac{\tau}{2}\right) (\partial_\nu \eta^0) \left[\left|A^{\frac{1}{2}}\nu \right|^2 \left|A^{\frac{1}{2}}\nabla_\Gamma \psi \right|^2 -(A\nabla_\Gamma \psi \cdot \nu)^2\right] \,\d S\,\d t \notag\\
& \quad -\lambda \int_{\Gamma_T} \left(s\xi +\frac{\tau}{2}\right) (\partial_\nu \eta^0) (\partial_\nu^A \psi)^2 \,\d S\,\d t \notag\\
& \ge -C s\lambda^2 \int_{\Omega_T} \xi \sigma |\nabla \psi|^2  \,\d x\,\d t - C s\lambda \int_{\Omega_T} \xi |\nabla \psi|^2  \,\d x\,\d t \nonumber\\
& + C s\lambda \int_{\Gamma_T} \xi (\partial_\nu \eta^0) |\nabla_\Gamma \psi|^2 \,\d S\,\d t -\lambda \int_{\Gamma_T} \left(s\xi +\frac{\tau}{2}\right) (\partial_\nu \eta^0) (\partial_\nu^A \psi)^2 \,\d S\,\d t.
\end{align*}
Now we estimate the boundary terms in $\left\langle N_1^{(\tau)} \psi,  N_2^{(\tau)} \psi \right\rangle_{L^2(\Gamma_T)}$.

Using Cauchy-Schwarz inequality, the estimate \eqref{ceqlm1} and the uniform ellipticity of $D$, we get
\begin{align*}
\langle N_{1,2}\psi, N_{2,1}\psi\rangle_{L^2(\Gamma_T)} &\ge - C  \int_{\Gamma_T} |\xi^{1/2}\nabla_\Gamma \psi| |s\lambda \xi^{1/2}\psi| \, \d S\, \d t -  Cs\lambda \int_{\Gamma_T} \partial_\nu \eta^0 \xi |\nabla_\Gamma \psi|^2 \,\d S\,\d t \\
& \ge -C \int_{\Gamma_T} \xi |\nabla_\Gamma \psi|^2 \,\d S\,\d t -C s^2\lambda^2 \int_{\Gamma_T} \xi^2 \psi^2 \,\d S\,\d t \\
& \quad -  Cs\lambda \int_{\Gamma_T} \partial_\nu \eta^0 \xi |\nabla_\Gamma \psi|^2 \,\d S\,\d t,
\end{align*}
where we employed Young's inequality and the estimate $\xi \le C \xi^2$. Making use of \ref{d} we have
$$\langle N_{1,1}\psi, N_{2,2}\psi\rangle_{L^2(\Gamma_T)} \ge -C s \int_{\Gamma_T} \xi^3 \psi^2 \,\d S\,\d t.$$
Finally, since $\eta^0 \in C^2(\Omega)$, Young's inequality and $\xi \le C \xi^2$ imply that
\begin{align*}
\langle N_{1,2}\psi, N_{2,3}\psi\rangle_{L^2(\Gamma_T)} &\ge  -C  \int_{\Gamma_T} |s \lambda \xi^{1/2}\psi| |\xi^{1/2}\partial_\nu^A \psi| \,\d S\,\d t\\
& \ge -C s^2 \lambda^2 \int_{\Gamma_T} \xi^2 \psi^2 \,\d S\,\d t - C \int_{\Gamma_T} \xi (\partial_\nu^A \psi)^2 \,\d S\,\d t.
\end{align*}

On the other hand, the estimate \ref{d} yields that
\begin{equation*}
\|\tilde{f}\|_{L^2(\Omega_T)}^2 \le \int_{\Omega_T} \left(2f^2 + Cs^2\lambda^4 \xi^2 \psi^2 \right) \,\d x\,\d t.
\end{equation*}
Absorbing lower order terms by choosing $s$ and $\lambda$ large as in \cite{ACMO'20}, we infer that
\begin{align}
&\lVert M_1^{(\tau)}\psi \rVert^2_{L^2(\Omega_T)}+  \lVert M_2^{(\tau)}\psi \rVert^2_{L^2(\Omega_T)} + \lVert N_1^{(\tau)}\psi \rVert^2_{L^2(\Gamma_T)}+  \lVert N_2^{(\tau)}\psi \rVert^2_{L^2(\Gamma_T)} \notag\\
& + 2 \int_{\Omega_T} \left(s^3\lambda^4\xi^3\sigma^2\psi^2 -s\lambda^2\xi \sigma |\nabla \psi|^2 \right) \,\d x\,\d t \notag\\
& + s^3\lambda^3\int_{\Gamma_T} \xi^3\psi^2 \,\d S\,\d t + s\lambda\int_{\Gamma_T} \xi (|\nabla_{\Gamma}\psi|^2 + (\partial_\nu^A \psi)^2) \,\d S\,\d t\nonumber\\
& \leq C \int_{\Omega_T} \left(2 f^2 + Cs^3\lambda^3\xi^3 \psi^2+ Cs\lambda \xi |\nabla \psi|^2\right) \,\d x\,\d t + C \int_{\Gamma_T} g^2 \,\d S\,\d t.\label{e7.28}
\end{align}
Multiplying \eqref{e7-28} and by integrating by parts, we obtain
\begin{align*}
\int_{\Omega_T}& \tilde{f} s\lambda^2 \xi \sigma \psi \,\d x\,\d t = s\lambda^2 \int_{\Omega_T} \xi \sigma  (M_1^{(\tau)} \psi)\psi \,\d x\,\d t - s^3\lambda^4 \int_{\Omega_T} \xi^3 \sigma^2 \psi^2 \,\d x\,\d t \notag\\
& \quad -s\lambda^2 \int_{\Omega_T} \xi \sigma \dv(A\nabla \psi) \psi \,\d x\,\d t \notag\\
& \quad + s\lambda^2 \int_{\Omega_T} \left(\frac{\tau}{2}-s\alpha\right) (\partial_t \log \gamma) \xi \sigma \psi^2 \,\d x\,\d t \notag\\
& \hspace{-0.7cm} = s\lambda^2 \int_{\Omega_T} \xi \sigma  (M_1^{(\tau)} \psi)\psi \,\d x\,\d t - s^3\lambda^4 \int_{\Omega_T} \xi^3 \sigma^2 \psi^2 \,\d x\,\d t + s\lambda^2 \int_{\Omega_T} \xi \sigma A\nabla\psi \cdot \nabla\psi \,\d x\,\d t \notag\\
& - \frac{1}{2} s\lambda^2 \int_{\Omega_T} \dv(A\nabla(\xi\sigma))\psi^2 \,\d x\,\d t + s\lambda^2 \int_{\Omega_T} \left(\frac{\tau}{2}-s\alpha\right) (\partial_t \log \gamma) \xi \sigma \psi^2 \,\d x\,\d t \notag\\
& \quad - s\lambda^2 \int_{\Gamma_T} \xi \sigma \psi \partial_\nu^A \psi\,\d S\,\d t + \frac{1}{2} s\lambda^2 \int_{\Gamma_T} (A\nabla(\xi \sigma)\cdot \nu) \psi^2\,\d S\,\d t .
\end{align*}
Then we have
\begin{align}
& s\lambda^2 \int_{\Omega_T} \xi \sigma A\nabla\psi \cdot \nabla\psi \,\d x\,\d t - s^3\lambda^4 \int_{\Omega_T} \xi^3 \sigma^2 \psi^2 \,\d x\,\d t \notag\\
& = \int_{\Omega_T} \tilde{f} s\lambda^2 \xi \sigma \psi \,\d x\,\d t - s\lambda^2 \int_{\Omega_T} \xi \sigma  (M_1^{(\tau)} \psi)\psi \,\d x\,\d t\notag\\
& +\frac{1}{2} s\lambda^2 \int_{\Omega_T} \dv(A\nabla(\xi\sigma))\psi^2 \,\d x\,\d t - s\lambda^2 \int_{\Omega_T} \left(\frac{\tau}{2}-s\alpha\right) (\partial_t \log \gamma) \xi \sigma \psi^2 \,\d x\,\d t\notag\\
& + s\lambda^2 \int_{\Gamma_T} \xi \sigma \psi \partial_\nu^A \psi\,\d S\,\d t - \frac{1}{2} s\lambda^2 \int_{\Gamma_T} (A\nabla(\xi \sigma)\cdot \nu) \psi^2\,\d S\,\d t \notag \\
& \le \int_{\Omega_T} \left(\frac{1}{6} (M_1^{(\tau)} \psi)^2 + f^2 + C s^2\lambda^4 \xi^2 \psi^2\right) \,\d x\,\d t + s\lambda^2 \int_{\Gamma_T} \xi \sigma \psi \partial_\nu^A \psi\,\d S\,\d t \notag\\
& \quad - \frac{1}{2} s\lambda^2 \int_{\Gamma_T} (A\nabla(\xi \sigma)\cdot \nu) \psi^2\,\d S\,\d t . \label{e7.30}
\end{align}
Multiplying both sides of \eqref{e7.30} by $2$, adding up the resulting inequality to \eqref{e7.28} and absorbing the lower boundary terms, we obtain
\begin{align}
&\lVert M_1^{(\tau)}\psi \rVert^2_{L^2(\Omega_T)}+  \lVert M_2^{(\tau)}\psi \rVert^2_{L^2(\Omega_T)} + \lVert N_1^{(\tau)}\psi \rVert^2_{L^2(\Gamma_T)}+ \lVert N_2^{(\tau)}\psi \rVert^2_{L^2(\Gamma_T)} \nonumber\\
& \quad + 6 s^3\lambda^4\int_{\Omega_T} \xi^3 \sigma^2 \psi^2 \,\d x\,\d t + s\lambda^2\int_{\Omega_T} \xi|\nabla\psi|^2 \,\d x\,\d t + s^3\lambda^3\int_{\Gamma_T} \xi^3\psi^2 \,\d S\,\d t\nonumber\\
& \quad  + s\lambda\int_{\Gamma_T} \xi (|\nabla_{\Gamma}\psi|^2 + (\partial_\nu^A \psi)^2) \,\d S\,\d t\nonumber\\
& \leq C_1 \int_{\Omega_T} \left(f^2 + s^3\lambda^3\xi^3 \psi^2 + s\lambda \xi |\nabla \psi|^2\right) \,\d x\,\d t +  C_1\int_{\Gamma_T} g^2 \,\d S\,\d t. \label{e7-31}
\end{align}
In order to eliminate the gradient term on right-hand side of \eqref{e7-31}, we take $\langle \cdot, \cdot\rangle_{L^2(\Omega_T)}$ of \eqref{e7-28} with $s\lambda \xi \psi$, and similarly to previous calculations we infer that
\begin{align*}
s\lambda \int_{\Omega_T} \xi |\nabla \psi|^2 \,\d x\,\d t \le \frac{1}{2C} \|M_1^{(\tau)}\psi\|^2_{L^2(\Omega_T)} + 3 \int_{\Omega_T} f^2 \,\d x\,\d t + C_1 s^3\lambda^3 \int_{\Omega_T} \xi^3\psi^2 \,\d x\,\d t,
\end{align*}
where $C_1>0$ is the same constant in \eqref{e7-31}. We then arrive at
\begin{align*}
&\lVert M_1^{(\tau)}\psi \rVert^2_{L^2(\Omega_T)}+  \lVert M_2^{(\tau)}\psi \rVert^2_{L^2(\Omega_T)} + \lVert N_1^{(\tau)}\psi \rVert^2_{L^2(\Gamma_T)}+ \lVert N_2^{(\tau)}\psi \rVert^2_{L^2(\Gamma_T)} \nonumber\\
& \quad + s^3\lambda^4\int_{\Omega_T} \xi^3 \sigma^2 \psi^2 \,\d x\,\d t + s\lambda^2\int_{\Omega_T} \xi|\nabla\psi|^2 \,\d x\,\d t + s^3\lambda^3\int_{\Gamma_T} \xi^3\psi^2 \,\d S\,\d t\nonumber\\
& \quad  + s\lambda\int_{\Gamma_T} \xi (|\nabla_{\Gamma}\psi|^2 + (\partial_\nu^A \psi)^2) \,\d S\,\d t\nonumber\\
& \leq C \int_{\Omega_T} \left(f^2 + s^3\lambda^3\xi^3 \psi^2\right) \,\d x\,\d t +  C\int_{\Gamma_T} g^2 \,\d S\,\d t. 
\end{align*}
Since $\sigma \ge C >0$ on $\overline{\Omega\setminus \omega'}$, we have
$$s^3\lambda^3 \int_{\Omega_T} \xi^3 \psi^2 \,\d x\,\d t \le C_1 s^3\lambda^3 \int_{\Omega_T} \xi^3 \sigma^2 \psi^2 \,\d x\,\d t + s^3\lambda^3 \int_{\omega'_T} \xi^3 \psi^2 \,\d x\,\d t,$$
Thus
\begin{align*}
&\lVert M_1^{(\tau)}\psi \rVert^2_{L^2(\Omega_T)}+  \lVert M_2^{(\tau)}\psi \rVert^2_{L^2(\Omega_T)} + \lVert N_1^{(\tau)}\psi \rVert^2_{L^2(\Gamma_T)}+ \lVert N_2^{(\tau)}\psi \rVert^2_{L^2(\Gamma_T)} \nonumber\\
& \quad + s^3\lambda^4\int_{\Omega_T} \xi^3 \psi^2 \,\d x\,\d t + s\lambda^2\int_{\Omega_T} \xi|\nabla\psi|^2 \,\d x\,\d t + s^3\lambda^3\int_{\Gamma_T} \xi^3\psi^2 \,\d S\,\d t\nonumber\\
& \quad  + s\lambda\int_{\Gamma_T} \xi (|\nabla_{\Gamma}\psi|^2 + (\partial_\nu^A \psi)^2) \,\d S\,\d t\nonumber\\
& \leq C s^3\lambda^4 \int_{\omega'_T} \xi^3 \psi^2 \,\d x\,\d t +  C \int_{\Omega_T} f^2 \,\d x\,\d t +  C\int_{\Gamma_T} g^2 \,\d S\,\d t. 
\end{align*}
Multiplying \eqref{e7-28} by $(s\xi)^{-\frac{1}{2}}$, we find
\begin{align*}
& s^{-1}\int_{\Omega_T} \xi^{-1} |\dv(A\nabla \psi)|^2 \,\d x\,\d t \notag\\
&\le C \int_{\Omega_T} \left(s^{-1}\xi^{-1} (M_1^{(\tau)} \psi)^2 + (s^3\lambda^4 \xi^3 \sigma^2 +s^3\xi^3) \psi^2\right) \,\d x\,\d t \notag\\
& \le C \left(s^3\lambda^4 \int_{\omega'_T} \xi^3 \psi^2 \,\d x\,\d t + \int_{\Omega_T} f^2 \,\d x\,\d t \right).
\end{align*}
Analogously, multiplying $M_1^{(\tau)} \psi$ by $s\lambda\xi^{-\frac{1}{2}}$ yields
\begin{align*}
s^{-1}\int_{\Omega_T} \xi^{-1} |\partial_t \psi|^2 \,\d x\,\d t 
& \le C \left(s^3\lambda^4 \int_{\omega'_T} \xi^3 \psi^2 \,\d x\,\d t + \int_{\Omega_T} f^2 \,\d x\,\d t \right).
\end{align*}
The identity $$\dv_{\Gamma}\left(D(x) \nabla_{\Gamma} \psi\right)=-N_{2}^{(\tau)} \psi +\left(\dfrac{\tau}{2}-s \alpha\right)\left(\partial_{t} \log \gamma\right) \psi+\partial_{\nu}^{A} \psi$$ and the estimate \ref{c} imply that
$$
\begin{aligned}
& s^{-1} \int_{\Gamma_{T}} \xi^{-1}\left|\operatorname{div}_{\Gamma}\left(D(x) \nabla_{\Gamma} \psi\right)\right|^{2} \,\d S\,\d t\\
& \leq \frac{1}{2} \|N_{2}^{(\tau)} \psi\|_{L^{2}\left(\Gamma_{T}\right)}^{2}+C s \int_{\Gamma_{T}} \xi^{3} \psi^{2} \,\d S\,\d t +C \int_{\Gamma_{T}} \xi\left(\partial_{\nu}^{A} \psi\right)^{2} \,\d S\,\d t.
\end{aligned}
$$
Moreover, using the identity $\partial_{t} \psi = N_{1}^{(\tau)} \psi + \lambda\left(s \xi+\dfrac{\tau}{2}\right) \partial_{\nu}^{A} \eta^{0} \psi$ and Young's inequality, we deduce that
$$s^{-1} \int_{\Gamma_{T}} \xi^{-1}\left|\partial_{t} \psi\right|^{2} \,\d S\,\d t \leq \frac{1}{2} \|N_{1}^{(\tau)} \psi \|_{L^{2}\left(\Gamma_{T}\right)}^{2} +C s \lambda^{2} \int_{\Gamma_{T}} \xi \psi^{2} \,\d S\,\d t.$$
The rest of the proof follows the same strategy in \cite{ACMO'20}.

\section*{Acknowledgments}
We would like to thank anonymous referees for careful reading and invaluable comments which led to this improved version. We also thank Hichem Ramoul for fruitful discussions.

\end{document}